\newtheorem{theo}{Theorem}[section]
\newtheorem{lem}{Lemma}[section]
\newtheorem{cor}{Corollary}[section]
\theoremstyle{definition}
\newcommand{\PP}{\Bbb{P}}
\newcommand{\N}{I\!\!N}
\newcommand{\R}{\Bbb{R}}
\newcommand{\E}{\Bbb{E}}
\newcommand{\be}{\begin{equation}}
\newcommand{\ee}{\end{equation}}
\newcommand{\fc}{\mathcal{F}}
\numberwithin{equation}{section}
\begin{document}
\date{}
\title{Feller's upper-lower class test in Euclidean space}
\author{Uwe Einmahl\\
Department of Mathematics, Vrije Universiteit Brussel\\
Pleinlaan 2, B-1050 Brussels, Belgium}
\maketitle

\begin{abstract}
We provide an extension of Feller's  upper-lower class test for  the Hartman-Wintner LIL to the LIL in Euclidean space. We obtain this result as a corollary to a general upper-lower class test  for $\Gamma_n T_n$  where $T_n=\sum_{j=1}^n Z_j$ is a sum of i.i.d. d-dimensional standard normal random vectors and $\Gamma_n$ is a convergent sequence of symmetric non-negative definite $(d,d)$-matrices. In the process we derive  new bounds for the tail probabilities of $d$-dimensional normally distributed random vectors.  \end{abstract}

\noindent \textit{AMS 2020 Subject Classifications:} 60F15.

\noindent \textit{Keywords:}  Hartman-Wintner LIL, Euclidean space, upper-lower class test, Gaussian random vectors.

\section{Introduction} Let $X_1, X_2, \ldots$ be independent identically distributed (= i.i.d.) $d$-dimensional random vectors and set  $S_n=\sum_{j=1}^n X_j, n \ge1.$  Denote the Euclidean norm by $|\cdot|$ and set  $Lt =\log_e(t \vee e), LLt = L(Lt),  t \ge 0.$  Let $\mathcal{N}(\mu,\Sigma)$  be  the $d$-dimensional normal distribution with mean $\mu$ and non-negative definite covariance matrix $\Sigma.$ \\[.2cm]
Assuming  that $\E |X_1|^2 < \infty$ and $\E X_1=0,$  it follows from the  $d$-dimensional version of the  Hartman-Wintner law of the iterated logarithm (LIL)  that with probability one,
\be \label{HWLIL}
\limsup_{n \to \infty} |S_n|/\sqrt{2nLLn} = \sigma,
\ee
where $\sigma^2$ is the largest eigenvalue of $\mathrm{Cov}(X_1)$ (=  covariance matrix of $X_1$). \\[.1cm]
A substantial refinement  of this result is given   by an upper-lower class test.  We say that  a non-decreasing  sequence $\{a_n\}$ of positive real numbers belongs to the upper class (lower class) for $|S_n|$ if we have with probability one,
$|S_n| \le a_n$ eventually ($|S_n| > a_n$ infinitely often). By the 0-1 law of Kolmogorov any such sequence $\{a_n\}$ has to be in one of these two classes. \\[.1cm]
The above LIL result then only implies that 
$a\sqrt{2nLLn}$ belongs to the upper (lower) class if $a > \sigma$  ($a < \sigma$), but, for instance, it is not clear from the LIL to what class the sequence $\sigma \sqrt{2nLLn}$ belongs. 
If one has normally distributed random vectors this sequence belongs to the lower class for $|S_n|$. This easily follows from the subsequent upper-lower class test which is part of a  more general result  in \cite{Kue}.\\[.2cm]
{\bf Theorem A (\cite{Kue}}) {\it If $X_n, n \ge 1$ is a sequence of independent   $\mathcal{N}(0,\Sigma)$-distributed random vectors we have for any non-decreasing sequence $\phi_n$ of positive real numbers,  
$$\PP\{|S_n| \le \sigma \sqrt{n}\phi_n \mbox{ eventually}\} =1 \mbox{ or } 0$$
according as 
$$ \sum_{n=1}^{\infty}n^{-1} \phi_n^{d_1}\exp(-\phi_n^2/2) < \infty \mbox{ or} =\infty,$$
where $d_1$ is the multiplicity of the largest eigenvalue $\sigma^2$ of $\Sigma$.}\\[.1cm]
 It is natural now to ask whether this result also holds for arbitrary mean zero random vectors satisfying the minimal condition $\E|X_1|^2 < \infty.$\\[.2cm]
In the $1$-dimensional case this problem was  investigated by Feller \cite{Fe} in 1946 who showed that the classical Kolmogorov-Erd\H os-Petrowski integral test (= the 1-dimensional version of the above upper-lower class test) remains valid if
\be \label{Fe1}
\E X_1^2 I\{|X_1| \ge t\} = O((LLt)^{-1}) \mbox{ as }t \to \infty. 
\ee
Moreover, he proved that this upper-lower class test holds in general if one replaces $\sqrt{n}$ by  $\sqrt{B_n}$ where $B_n = \sum_{j=1}^n \sigma_j^2$ and $\sigma_j^2 = \E X_1^2 I\{|X_1| \le \sqrt{j}/(LLj)^4\}, j \ge 1$, that is, we have 
$$\PP\{|S_n| \le  \sqrt{B_n}\phi_n \mbox{ eventually}\} =1 \mbox{ or } 0$$
according as 
$$ \sum_{n=1}^{\infty}n^{-1} \phi_n \exp(-\phi_n^2/2) < \infty \mbox{ or} =\infty.$$
Feller's proof was based on a  skilful truncation argument which unfortunately only worked for random variables having distributions which are symmetric about zero. It was an  open problem for quite some time whether Feller's results also hold in the non-symmetric case. This question was  settled in \cite{E-5} where an extension of Feller's truncation arguments to the non-symmetric case was found.\\
Feller  also stated an interesting conjecture in his paper \cite{Fe}, namely that  the above upper-lower class test still holds if one replaces $B_n$ by $n\sigma_n^2$.\\  This conjecture has finally been proven in  \cite{Di-E}  leading to the following  result in dimension 1,
\be \label{Fe-final}
\PP\{|S_n| \le \sigma_n \sqrt{n}\phi_n \mbox{ eventually}\} =1 \mbox{ or } 0\ee
according as 
$$\sum_{n=1}^{\infty}n^{-1} \phi_n\exp(-\phi_n^2/2) < \infty \mbox{ or} =\infty.$$
Moreover, it was shown in \cite{Di-E} that  (\ref{Fe-final}) holds if 
$\sigma_n^2 = \E X_1^2 I\{|X_1| \le c_n\}$
where $c_n$ can be any  non-decreasing sequence of positive real numbers satisfying for a suitable sequence $\epsilon_n \to 0$ and large $n,$
\be \label{c_n}
\exp(-(Ln)^{\epsilon_n}) \le c_n/\sqrt{n} \le \exp((Ln)^{\epsilon_n})
\ee
From this result (and also from Feller's  original result) it easily follows that the Kolmogorov-Erd\H os-Petrowski upper-lower class test is applicable for sums $S_n$  of i.i.d. random variables  which satisfy condition (\ref{Fe1}). If this condition is not satisfied the sums $S_n$ are in a certain sense  {\it smaller} than sums of i.i.d normal random variables. There are examples where, for instance, the sequence $\sigma \sqrt{2nLLn}$
belongs to the upper class for $|S_n|.$\\[.2cm]
We return to the $d$-dimensional case:\\[.1cm] Feller's upper-lower class test (\ref{Fe-final}) was actually obtained in \cite{Di-E}  as a corollary to a $d$-dimensional result. To formulate this result we need some extra notation:  we are now assuming that $X_n=(X_n^{(1)},\ldots, X_n^{(d)})$ are i.i.d.   mean zero random vectors such that $\E|X_1|^2 < \infty$ and  we define a sequence $\Gamma_n$  of symmetric non-negative definite matrices as follows,
\be \label{gamma_n}
\Gamma_n^2 =\left[\E X_1^{(i)}X_1^{(j)} I\{|X_1| \le c_n\}\right]_{1 \le i,j \le d},
\ee
where again $c_n, n \ge 1$ is a non-decreasing sequence of positive real numbers satisfying condition (\ref{c_n}) for some sequence $\epsilon_n \to 0.$\\
Then we clearly have $\Gamma_n^2 \to \mathrm{Cov}(X_1)$ and assuming that this latter matrix is positive definite we can assume w.l.o.g. that this is also the case for the matrices $\Gamma_n$. \\[.2cm]
{\bf Theorem B (\cite{Di-E})} 
{\it  Let $X_n, n \ge 1$ be i.i.d. mean zero random vectors in $\R^d$ with $\E |X_1|^2 < \infty$ and $\mathrm{Cov}(X_1) = \Gamma^2,$ where $\Gamma$ is a symmetric positive definite (d,d)-matrix. 
 Let  $c_n$ be a non-decreasing sequence of positive real-numbers satisfying condition (\ref{c_n}) for large $n$ and let $\Gamma_n$ be defined as in (\ref{gamma_n}).
 Then we have for any non-decreasing sequence $\phi_n$ of positive numbers,
 $$\PP\{|\Gamma_n^{-1}S_n| \le \sqrt{n}\phi_n \mbox{ eventually}\} = 1 \mbox{ or }=0$$
 according as }
 $$ \sum_{n=1}^{\infty}n^{-1} \phi^d_n\exp(-\phi_n^2/2) < \infty \mbox{ or} =\infty.$$
 \\[.2cm]
 Letting $\lambda_n (\Lambda_n)$ be the smallest (largest) eigenvalue of $\Gamma_n, n \ge 1$ we can infer from Theorem B,
 \be \label{eq21}
 \sum_{n=1}^{\infty}n^{-1} \phi^d_n\exp(-\phi_n^2/2) < \infty \Rightarrow \PP\{|S_n| \le \Lambda_n \sqrt{n}\phi_n \mbox{ eventually}\} = 1
\ee
 and
 \be \label{eq22}
 \sum_{n=1}^{\infty}n^{-1} \phi^d_n\exp(-\phi_n^2/2) = \infty \Rightarrow \PP\{|S_n| >\lambda_n \sqrt{n} \phi_n \mbox{ infinitely often}\} = 1,
\ee
which clearly implies the upper-lower class test (\ref{Fe-final})  in the 1-dimensional case.\\[.2cm]   
In the higher-dimensional case this result provides however only a partial answer. Though the lower class part of  this result holds for any covariance matrix, it is only interesting if the covariance matrix $\Sigma=\Gamma^2$ of $X_1$ is a multiple of the identity matrix. Otherwise, $\lambda_n$ will converge to the smallest eigenvalue of the matrix $\Gamma$ which is strictly smaller then the largest eigenvalue of $\Gamma$. In this case (\ref{eq22})   gives us less information than (\ref{HWLIL}).\\[.2cm]
This has been remedied in \cite{Di} where the following result was proven:\\ Let $\Lambda_n=\lambda_{n,1} \ge \ldots \ge \lambda_{n,d} = \lambda_n$ be the
the eigenvalues of $\Gamma_n$ arranged in a non-decreasing order and taking into account the multiplicities. If $d_1$ denotes the multiplicity of the largest eigenvalue of $\Gamma,$
we have  
\be \label{eq23}
\sum_{n=1}^{\infty}n^{-1} \phi^{d_1}_n\exp(-\phi_n^2/2)  < \infty \Rightarrow \PP\{|S_n| \le \lambda_{n,1} \sqrt{n}\phi_n \mbox{ eventually}\} = 1
\ee
 and
\be \label{eq24}
\sum_{n=1}^{\infty}n^{-1} \phi^{d_1}_n\exp(-\phi_n^2/2) = \infty \Rightarrow \PP\{|S_n| >\lambda_{n,d_1} \sqrt{n} \phi_n \mbox{ infinitely often}\} = 1.
\ee
If $d_1=1$ this  gives  a complete upper-lower class test for $|S_n|.$ Moreover, it implies  the $d$-dimensional version of Feller's result  under condition (\ref{Fe1}) which is due to \cite{E-6}, namely that  the upper-lower class test of Theorem A applies if
\be
\E \langle v, X_1\rangle^2 I\{|\langle v, X_1\rangle| >t \} = O((LLt)^{-1}) \mbox{ as }t \to \infty
\ee
for all vectors $v$ in the eigenspace  of $\Sigma$ determined by its largest eigenvalue $\sigma^2.$ \\[.2cm]
The purpose of the present paper is to completely solve the problem by  establishing a general  upper-lower class test  for the $d$-dimensional version of the Hartman-Wintner LIL.\\[.1cm]
As in \cite{Di} 
a crucial tool for proving this upper-lower class test is  the following  strong invariance principle (see Theorem 2.3 in \cite{Di-E}).\\[.2cm]
{\bf Theorem C}  \emph{Let $\{X_n: n \ge 1\}$  and $\Gamma_n, n \ge 1$ be as in Theorem B. If the underlying p-space $(\Omega,\fc,\PP)$ is rich enough, one can construct independent $\mathcal{N}(0,I)$-distributed random vectors $Z_n, n \ge 1$ such that we have for the sums} $T_n=\sum_{j=1}^n Z_j,$
\begin{itemize}
\item [(a)] $S_n - \Gamma\; T_n = o(\sqrt{n LLn}) \mbox{ as }n \to \infty \mbox{ with prob. 1}$
\item[(b)] $\PP\{|S_n -\Gamma_n\;T_n| \ge 2\sqrt{n}/LLn, |S_n| \ge \frac{4}{3} \|\Gamma\| \sqrt{nLLn} \mbox{ infinitely often}\}=0$
\item[(c)]  $\PP\{|S_n -\Gamma_n\;T_n| \ge 2\sqrt{n}/LLn, |\Gamma\,T_n| \ge \frac{4}{3} \|\Gamma\| \sqrt{nLLn} \mbox{ infinitely often}\}=0$
\end{itemize}
From Theorem C it follows that
 the lower (upper) class of $|S_n|$ matches the lower (upper) class of $|\Gamma_n T_n|$. \\By definition of $\Gamma_n$ we have  $\Gamma^2_n \to \mathrm{Cov}(X_1)=\Gamma^2$ where $\Gamma$ is symmetric and non-negative definite. This implies via Theorem X.1.1 in \cite{Bha}  that $\Gamma_n \to \Gamma$. \\
Further note that we have with probability one,
$$
\limsup_{n \to \infty} |\Gamma_n T_n|/\sqrt{2nLLn} =\sigma.
$$
 whenever $\Gamma_n$ converges to $\Gamma$ with largest eigenvalue $\sigma.$ \\So it is natural to ask whether one can prove an upper-lower class test for $|\Gamma_n T_n|$ where $\Gamma_n$ is a  general convergent sequence and not only 
one of the particular sequences determined by (\ref{gamma_n}). \\This is indeed the case, but we have to impose an extra condition on the sequence  $\Gamma_n$ to make sure that it does  not fluctuate too much. \\[.1cm]
 We 
set for any $\alpha >0$ and $k \ge 1,$
\be \label{DELTA}
\Delta_k(\alpha):=\max_{n_k(\alpha) \le m \le n \le n_{k+1}(\alpha)}\| \Gamma^2_n -\Gamma^2_m\|,
\ee
where $n_k(\alpha)=[\exp(\alpha k/Lk)], k \ge 1$, $[x]$ is the integer part of $x \in \R$ and $\|\cdot\|$ denotes the Euclidean matrix norm, that is $\|A\| = \sup_{|v| \le 1} |Av|$.
\begin{theo} \label{main}
Assume that $d \ge 2.$ Let $\Gamma_n$ be a sequence of symmetric non-negative definite $(d,d)$-matrices  which converges to a symmetric non-negative definite  $(d,d)$-matrix $\Gamma \ne 0$. Assume that for any $\alpha >0,$
 \be \label{main112}
 \sum_{k=1}^{\infty} k^{-\delta}\Delta_k(\alpha) < \infty, \;\forall \delta >0.
 \ee
 If $T_n=\sum_{j=1}^n Z_j, n \ge 1$ where $\{Z_n\}$ is a sequence of independent $\mathcal{N}(0,I)$-distributed random vectors,  we have for any non-decreasing  sequence $\phi_n$ of positive real numbers,
$$\PP\{|\Gamma_n T_n| \le  \sqrt{n} \phi_n \mbox{ eventually}\} = 1 \mbox{ or } =0$$
according as
$$\sum_{n=1}^{\infty} \frac{\phi_n}{n \lambda_{n,1}}\prod_{i=2}^{d _1} \left(\frac{\lambda_{n,1}}{(\lambda_{n,1}^2-\lambda_{n,i}^2)^{1/2}} \wedge  \frac{\phi_n}{\lambda_{n,1}}\right) \exp\left(-\frac{\phi_n^2}{2\lambda_{n,1}^2} \right) < \infty \mbox{ or }=\infty,$$
where  $\lambda_{n,1} \ge \ldots \ge \lambda_{n,d}$ are the eigenvalues of $\Gamma_n$ and $d_1$  is the multiplicity of the largest eigenvalue of the matrix $\Gamma$.\end{theo}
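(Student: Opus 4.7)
The plan is to establish a sharp two-sided tail estimate for the Gaussian random vector $\Gamma Z$ and combine it with a blocking argument along the subsequence $n_k=n_k(\alpha)$ defined in (\ref{DELTA}). Condition (\ref{main112}) will guarantee that $\Gamma_n^2$ is essentially constant on each block $[n_k,n_{k+1}]$, which will be used to transfer the problem from $|\Gamma_nT_n|$ to $|\Gamma_{n_k}T_{n_k}|$ (and to the independent block-increments $T_{n_{k+1}}-T_{n_k}$), after which an ordinary Borel-Cantelli analysis indexed by $k$ should suffice.

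The first main step is the sharp tail bound: for a symmetric non-negative definite matrix $\Gamma$ with ordered eigenvalues $\lambda_1\ge\ldots\ge\lambda_d$, $\lambda_1>0$, and $Z\sim\mathcal{N}(0,I_d)$, I aim to prove that for all $r\ge C_d\lambda_1$,
$$
\PP\{|\Gamma Z|>r\}\asymp \frac{r}{\lambda_1}\prod_{i=2}^{d}\left(\frac{\lambda_1}{(\lambda_1^2-\lambda_i^2)^{1/2}}\wedge\frac{r}{\lambda_1}\right)\exp\!\left(-\frac{r^2}{2\lambda_1^2}\right),
$$
with constants depending only on $d$ (the convention $\lambda_1/0=\infty$ being understood). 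In the eigenbasis of $\Gamma$ one has $|\Gamma Z|^2=\sum_{i=1}^d\lambda_i^2 Z_i^2$, and after conditioning on the coordinate $Z_1$ of largest variance the remaining one-dimensional integrations each produce either a saddle-point factor $\lambda_1/(\lambda_1^2-\lambda_i^2)^{1/2}$ (when $\lambda_i$ is well separated from $\lambda_1$) or a chi-square-type factor of order $r/\lambda_1$ (when $\lambda_i$ is close to $\lambda_1$); the minimum captures both regimes in a single formula. For $i>d_1$, $\lambda_i$ stays bounded away from $\sigma$ in the limit, so the first alternative is active and bounded and those factors can be absorbed into the constants, reducing the product to $2\le i\le d_1$ as in the theorem.

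The blocking step then proceeds as follows. From (\ref{main112}) one deduces that $\Delta_k(\alpha)$ is summable against any polylogarithmic weight, which ensures that the fluctuation of $\Gamma_n^2$ over the $k$-th block is negligible compared to $1/Lk$. Combined with $n_{k+1}/n_k\to 1$ and a L\'evy-type maximal inequality for the Gaussian walk $\{T_n\}$, this shows that $\max_{n_k\le n<n_{k+1}}|\Gamma_nT_n|/\sqrt{n}$ differs from $|\Gamma_{n_k}T_{n_k}|/\sqrt{n_k}$ by an amount which is small against $\phi_n$ in the critical regime; hence the behaviour of $|\Gamma_nT_n|/\sqrt{n}$ compared to $\phi_n$ is determined by the behaviour at the block endpoints $n_k$. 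For the convergence direction, the first Borel-Cantelli lemma applied to the block-maximum events, together with the tail bound of step one and the standard integral comparison between $\sum_n a_n$ and $\sum_k a_{n_k}(n_{k+1}-n_k)$, yields the upper-class conclusion exactly when the theorem's series converges. For the divergence direction, I would exploit the independence of the increments $U_k:=T_{n_{k+1}}-T_{n_k}$ by considering events $B_k:=\{|\Gamma_{n_{k+1}}U_k|>\sqrt{n_{k+1}-n_k}\,\tilde\phi_k\}$ with a suitably enlarged $\tilde\phi_k$, apply the second Borel-Cantelli lemma, and then absorb the $\Gamma_{n_{k+1}}T_{n_k}$ contribution using the Hartman-Wintner LIL valid for $\Gamma_nT_n$.

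The main obstacle I expect is the sharp two-sided tail estimate with the $\wedge$-structure: one has to interpolate uniformly between the Kuelbs regime (eigenvalues exactly equal, giving a $d_1$-dimensional chi-square tail) and the generic regime (eigenvalues well separated, giving a Gaussian saddle-point tail), and the bound must be sharp enough that both halves of the test match the theorem's series in the critical range $\phi_n\sim\sqrt{2LLn}$. This is presumably the content of the new Gaussian tail bounds announced in the abstract. A secondary difficulty is choosing the parameters $\alpha$ and $\tilde\phi_k$ so that the discretization preserves the convergence/divergence dichotomy on the nose and the fluctuation bound from (\ref{main112}) is strong enough to avoid losing logarithmic factors.
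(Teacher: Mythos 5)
Your Step 1 (the two-sided tail bound) and your upper-class outline are essentially the paper's: the paper proves exactly such a bound as Theorem~\ref{LemTailBoundsDistriGammanZn} via Zolotarev-type density estimates and a regularized vector $Y_t$, and the upper-class proof uses the L\'evy maximal inequality over the Feller blocks $[n_k,n_{k+1}]$ with $n_k=[\exp(k/Lk)]$, a split into ``good'' $k$ (small $\Delta_k$) and ``bad'' $k$ (crude bound, sparseness from (\ref{main112})), and a block-to-block summation lemma (Lemma~\ref{LemSubseqPhi_nEnoughUpper_or_LowerCl}). One small slip in your tail-bound formula: the prefactor should be $\lambda_1/r$, not $r/\lambda_1$ (with $r/\lambda_1$ the bound does not reduce to the chi-square tail $\sim(r/\lambda_1)^{d-2}e^{-r^2/2\lambda_1^2}$ when all eigenvalues coincide).

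The genuine gap is in your lower-class argument. You propose to apply the second Borel--Cantelli lemma to independent events built from the increments $U_k=T_{n_{k+1}}-T_{n_k}$ and to ``absorb'' the $T_{n_k}$ contribution via the LIL. This cannot work at the sharpness required here. Along the Feller subsequence, $n_{k+1}-n_k\sim n_k/Lk$, so $U_k$ has standard deviation $\sim\sqrt{n_k/Lk}$, while the LIL only controls $|T_{n_k}|$ at scale $\sqrt{n_k LLn_k}$; forcing the threshold event $|\Gamma_{n_{k+1}}T_{n_{k+1}}|>\sqrt{n_{k+1}}\phi_{n_{k+1}}$ from an event about $U_k$ alone, modulo an LIL error, would require a deviation of $U_k$ of order $\sqrt{Lk\cdot LLk}$ in normalized units, with summable probabilities. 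If instead you coarsen the subsequence so that $n_{k+1}/n_k\to\infty$ to make the LIL error negligible, the block-wise series $\sum_k\PP(B_k)$ no longer inherits divergence from $\sum_n a_n=\infty$. The actual proof uses the Erd\H os--Feller pairwise quasi-independence machinery: one works with thin-shell events $A_n=\{\sqrt{n}\phi_n<|\Gamma_nT_n|<\sqrt{n}(\phi_n+\beta/\phi_n)\}$, bounds $\PP(A_{n_j}\cap A_{n_m})$ via the independence of $T_{n_j}$ and the increment together with Theorem~\ref{LemTailBoundsDistriGammanZn}(iii)--(iv), obtains
$\PP(\cup_j A_{n_j})\ge\sum_j\PP(A_{n_j})\bigl(1-\sum_m\PP(B_{j,m})\bigr)-\epsilon_i\ge K_0>0$, and then upgrades $\ge K_0$ to $=1$ by the Hewitt--Savage $0$--$1$ law. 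An additional subtlety you did not address: because $\Gamma_n$ is not constant, the cross terms $(\Gamma_{n_m}-\Gamma_{n_j})T_{n_j}$ must be controlled, which forces a further restriction to a subsequence $\mathbb{N}_2$ where $\|\Gamma_{n_m}-\Gamma_{n_j}\|$ is small over short windows, justified again through (\ref{main112}) (Lemma~\ref{lem32xx} and Lemma~\ref{lem251}). Without the thin-shell events, the pairwise correlation bounds, and the $0$--$1$ law, the divergence half does not go through.
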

\noindent
As usual,  we set $a/0=\infty, a >0$ and $\prod_{i=2}^1 a_i=1.$\\[.2cm]
Combining Theorem \ref{main} and Theorem C, we obtain the following multi-dimensional version of Feller's upper-lower class test (\ref{Fe-final}).
\begin{cor} \label{CFeller} 
Assume that $d \ge 2.$ Let $X_n, n \ge 1$ be a sequence of i.i.d. random vectors with $\E|X_1|^2< \infty,$ $\E X_1=0$ and  non-negative definite covariance matrix $\Gamma^2 \ne 0.$ Then  we have
$$\PP\{|S_n| \le  \lambda_{n,1} \sqrt{n} \phi_n \mbox{ eventually}\} = 1 \mbox{ or } =0$$
according as
$$\sum_{n=1}^{\infty} \frac{\phi_n}{n}\prod_{i=2}^{d _1} \left(\frac{\lambda_{n,1}}{(\lambda_{n,1}^2-\lambda_{n,i}^2)^{1/2}} \wedge  \phi_n\right) \exp\left(-\frac{\phi_n^2}{2} \right) < \infty \mbox{ or }=\infty,$$
where  $\lambda_{n,1} \ge \ldots \ge \lambda_{n,d}$ are the eigenvalues of the matrices  $\Gamma_n, n \ge 1$ defined as in (\ref{gamma_n}) and  $d_1$ is the multiplicity of the largest eigenvalue of $\Gamma^2$.
 \end{cor}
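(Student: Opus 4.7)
The plan is to prove Corollary \ref{CFeller} by applying Theorem \ref{main} to the normalized matrix sequence $\widetilde\Gamma_n:=\Gamma_n/\lambda_{n,1}$, where $\Gamma_n$ is the truncated covariance defined in (\ref{gamma_n}), and then transferring the class membership from $|\Gamma_n T_n|$ to $|S_n|$ via the strong invariance principle of Theorem 2.3 in \cite{Di-E}. Normalizing by $\lambda_{n,1}$ makes $\{|\widetilde\Gamma_n T_n|\le\sqrt{n}\phi_n\text{ eventually}\}$ identically equal to the target event $\{|\Gamma_n T_n|\le\lambda_{n,1}\sqrt{n}\phi_n\text{ eventually}\}$ while $\phi_n$ itself remains non-decreasing, so no monotonicity adjustment is required inside Theorem \ref{main}.

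The first task is to verify the hypotheses of Theorem \ref{main} for $\widetilde\Gamma_n$. Dominated convergence gives $\Gamma_n^2\to\Gamma^2$, hence $\Gamma_n\to\Gamma$ by continuity of the non-negative matrix square root (Theorem X.1.1 in \cite{Bha}). Since $\Gamma\ne 0$ its largest eigenvalue $\sigma$ is strictly positive, $\lambda_{n,1}\to\sigma$, and so $\widetilde\Gamma_n\to\Gamma/\sigma$, whose largest eigenvalue $1$ still has multiplicity $d_1$. For condition (\ref{main112}), for $m\le n$
\[
\|\Gamma_n^2-\Gamma_m^2\|=\bigl\|\E\bigl[X_1 X_1^\top I\{c_m<|X_1|\le c_n\}\bigr]\bigr\|\le \E\bigl[|X_1|^2 I\{c_m<|X_1|\le c_n\}\bigr]
\]
(operator norm bounded by trace for positive semi-definite matrices), and disjointness of the incremental tail sets telescopes to $\sum_k\Delta_k(\alpha)\le\E|X_1|^2<\infty$. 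Splitting $\widetilde\Gamma_n^2-\widetilde\Gamma_m^2=(\Gamma_n^2-\Gamma_m^2)/\lambda_{n,1}^2+\Gamma_m^2\bigl(1/\lambda_{n,1}^2-1/\lambda_{m,1}^2\bigr)$ and combining Weyl's inequality $|\lambda_{n,1}^2-\lambda_{m,1}^2|\le\|\Gamma_n^2-\Gamma_m^2\|$ with $\lambda_{n,1}\ge\sigma/2$ for large $n$ yields $\|\widetilde\Gamma_n^2-\widetilde\Gamma_m^2\|\le C\|\Gamma_n^2-\Gamma_m^2\|$, so (\ref{main112}) also holds for $\widetilde\Gamma_n$.

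Theorem \ref{main} applied to $\widetilde\Gamma_n$ with the given non-decreasing $\phi_n$ then governs $\{|\Gamma_n T_n|\le\lambda_{n,1}\sqrt{n}\phi_n\text{ eventually}\}$ by the convergence of
\[
\sum_n\frac{\phi_n}{n}\prod_{i=2}^{d_1}\Bigl(\frac{1}{(1-(\lambda_{n,i}/\lambda_{n,1})^2)^{1/2}}\wedge\phi_n\Bigr)\exp(-\phi_n^2/2),
\]
which, via the identity $(1-(\lambda_{n,i}/\lambda_{n,1})^2)^{1/2}=(\lambda_{n,1}^2-\lambda_{n,i}^2)^{1/2}/\lambda_{n,1}$, is exactly the series in Corollary \ref{CFeller}.

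Finally, to pass from $|\Gamma_n T_n|$ to $|S_n|$, I would invoke Theorem 2.3 in \cite{Di-E}, which couples the two processes with $|S_n-\Gamma_n T_n|=o(\sqrt{n/LLn})$ almost surely. In the critical regime $\phi_n^2\asymp LLn$ the boundary $\lambda_{n,1}\sqrt{n}\phi_n$ is of order $\sqrt{nLLn}$, so the coupling error amounts to an additive $o(1/\sqrt{LLn})$ perturbation of $\phi_n$; this multiplies each summand by $\exp(o(1))$ and alters the polynomial prefactor by a $(1+o(1))$ factor, so convergence is preserved, and after a routine monotonization the perturbed sequence can be fed back into Theorem \ref{main} to complete the transfer. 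The main technical obstacle I anticipate is precisely this final stability check: confirming that the invariance-principle error is genuinely negligible at every point of the series even when $\phi_n$ lies exactly on the convergence/divergence boundary, a type of robustness argument already present in the one-dimensional analyses of \cite{Di-E}.
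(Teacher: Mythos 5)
Your proof is correct and follows essentially the same route as the paper: verify condition (\ref{main112}) for the truncated-covariance matrices via the telescoping bound $\sum_k \Delta_k(\alpha) \le \E|X_1|^2$, invoke Theorem \ref{main}, and transfer from $|\Gamma_n T_n|$ to $|S_n|$ using the strong invariance principle of \cite{Di-E} together with the stability of the integral test under $O(1/\phi_n)$ perturbations (which the paper records explicitly via $\psi_{n,\beta}=\phi_n+\beta/\phi_n$). The only cosmetic difference is that you rescale the matrices to $\Gamma_n/\lambda_{n,1}$ so that $\phi_n$ can be used directly, whereas the paper keeps $\Gamma_n$ and feeds in the test sequence $\lambda_{n,1}\phi_n$, observing that $\lambda_{n,1}$ is non-decreasing by Loewner monotonicity of the truncated covariances; both variants yield the same series and the same event.
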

\noindent Corollary \ref{CFeller} immediately implies an improved version of (\ref{eq23}). Likewise, one can infer (\ref{eq24}) from Theorem \ref{main} (see section 3.3).\\[.2cm]
Though Corollary \ref{CFeller} provides a final result concerning the $d$-dimensional version of the Hartman-Winter LIL, several related problems remain open. A natural question is whether one can obtain a similar integral test refinement for the LIL in infinite-dimensional Hilbert spaces. We conjecture that this is possible under a second moment assumption (see \cite{E-6}). But in  the infinite-dimensional setting  a finite second moment is not necessary for the LIL.  No complete upper-lower class test seems to be known for the LIL in Hilbert space if the  second moment is infinite (see \cite{Ein1991, Ein1992}). In \cite{E-Li} LIL type results were obtained with different normalizers such as $\sqrt{n}(LLn)^p, p > 1/2$ and the next task would be to determine the upper and lower classes in this case as well. Finally, there is also an upper-lower class test for weighted sums of independent random variables (see \cite{bhw}) under an extra assumption which is slightly more restrictive than (\ref{Fe1}). It is not clear whether a  version of this result exists under the minimal assumption of a finite second moment and what such a result would look like. \\[.2cm]
The remaining part of the paper is organised as follows: In Section 2 we formulate and prove  the new results for Gaussian random vectors. The main result is Theorem \ref{LemTailBoundsDistriGammanZn} which should be of independent interest. In Section 3 we prove all other results:  The proof of Theorem \ref{main} can be found  in subsection 3.1 (upper class part) and  in subsection  3.2 (lower class part). Corollary \ref{CFeller} and (\ref{eq24}) are proven in subsection 3.3. Finally Lemma \ref{LemSubseqPhi_nEnoughUpper_or_LowerCl} which is crucial for the proof of Theorem \ref{main} is proven in subsection 3.4.
\section{Some results on Gaussian random vectors}
\subsection{Bounds related to Zolotarev's result} \label{fixed cov}
  Let $Z: \Omega \to \R^d$ be a $d$-dimensional $\mathcal{N}(0, I )$-distributed random vector. Then  $|Z|^2$ has a 
	a chi-square distribution with $d$ degrees of freedom and we have an explicit formula for the 
density  $f_d$ of $|Z|^2$, namely, 
\be \label{EqDensChi^2}
		f_{d}(z) = C_0 z^{d/2-1}\exp(-z/2), z > 0,
	\ee
where $C_0=C_0(d)= 2^{-d/2} \Gamma(d/2)^{-1}.$	\\This implies via  partial integration that there exist  positive constants $C_1$ and $C_2$ depending on $d$ only such that 
		 \be \label{eq42}
		C_1 t^{d-2}\exp(-t^2/2) 
	\le 
		\PP\{|Z| \ge t\} 
	\le 
		C_2 t^{d-2}\exp(-t^2/2), t \ge 2d.
	\ee
	Consider now more generally a  random vector $Y: \Omega \to \R^d$ having  an  $\mathcal{N}(0, \Gamma^2)$-distribution where $\Gamma\ne 0$ is a general non-negative definite  matrix. 
	Let $\{e_1,\ldots,e_d\}$ be an orthonormal basis of $\R^d$ consisting of eigenvectors  corresponding to  the eigenvalues $\lambda_1  \ge \lambda_2 \ge \ldots \ge \lambda_d$ of $\Gamma$.  Then we clearly have
\be \label{fourier1}
Y=\sum_{i=1}^d  \langle Y, e_i\rangle e_i:=  \sum_{i=1}^d \lambda_i \eta_i e_i,
\ee
where $\eta_i, 1 \le i \le d$ are independent 1-dimensional standard normal random variables.
Moreover it follows that
\be \label{fourier2}
 |Y|^2 =\sum_{i=1}^d \lambda_i^2 \eta_i^2
\ee
 Due to  a classical result of Zolotarev \cite{zo}  we have for the density $h$  of $|Y|^2$
	\be \label{zol}
	h(z) \sim K(\Gamma^2)\lambda_1^{-2} f_{d_1}(z/\lambda_1^2) \mbox{ as } z \to \infty,\ee
	where 
	$$K(\Gamma^2)= \prod_{i=d_1 +1}^d (1- \lambda_i^2/\lambda_1^2)^{-1/2}$$ and $d_1$ is the multiplicity of the largest eigenvalue $\lambda_1$ of $\Gamma$. If $\Gamma^2$ is a diagonal matrix, we set $K(\Gamma^2)=1$. \\[.1cm]
In view of (\ref{zol}) it is clear that the tail probabilities $\PP\{|Y| \ge t\}$ are of the same order as $\PP\{|Z'| \ge t/\lambda_1\}$ for large $t$, where $Z'$ is a $d_1$-dimensional $\mathcal{N}(0,I)$-distributed random vector.\\[.2cm] In order to obtain concrete bounds for $\PP\{|Y| \ge t\}$, Einmahl (\cite{Ein1991}) derived 
the following inequalities  for  $h$ (see Lemma 1 and Lemma 3(b) in \cite{Ein1991})
\be \label{eq43}
h(z) \ge \frac{1}{4} K(\Gamma^2) f_{d_1}(z/\lambda_1^2)/\lambda_1^2, z \ge 2d_1 \E[|Y|^2]/(1-\lambda_{d_1 +1}^2/\lambda_1^2)
\ee
and moreover if $d_1 \ge 2,$
\be \label{eq44}
h(z) \le K(\Gamma^2) f_{d_1}(z/\lambda_1^2)/\lambda_1^2, z >0.
\ee
We still need an upper bound for $h$ if $d_1=1$. The bound given in Lemma 3(a) of \cite{Ein1991}  is not sufficient for our present purposes.\\

\begin{lem}\label{LemDensBound-1}
Assume that $d\ge 2$ and let $Y$ be an $\mathcal{N}( 0, \Gamma^2)$-distributed random vector
where the largest eigenvalue $\lambda_1^2$ of $\Gamma^2$ has multiplicity 1. Then we have for the density $h$ of $|Y|^2$,
\be \label{d_1=1}
h(z) \le C_3 K(\Gamma^2) f_1(z/\lambda_1^2)/\lambda_1^2, z > 0,
\ee
where $C_3=C_3(d) > 1$ is a constant depending on $d$ only.
 \end{lem}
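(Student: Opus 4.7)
I would begin by writing $|Y|^2 = \lambda_1^2\eta_1^2 + W$ with $W := \sum_{i=2}^d \lambda_i^2 \eta_i^2$ independent of $\eta_1 \sim \mathcal{N}(0,1)$, and let $g$ denote the density of $W$. Since $h$ is the convolution of $g$ with the density $\rho(z) := f_1(z/\lambda_1^2)/\lambda_1^2 = (\lambda_1\sqrt{2\pi z})^{-1} e^{-z/(2\lambda_1^2)}$ of $\lambda_1^2\eta_1^2$, and since $\rho(z-w)/\rho(z) = \sqrt{z/(z-w)}\, e^{w/(2\lambda_1^2)}$, the inequality to prove is equivalent to
\begin{equation*}
M(z) := \int_0^z g(w)\sqrt{\frac{z}{z-w}}\, e^{w/(2\lambda_1^2)}\, dw \le C_3\, K(\Gamma^2), \quad z > 0.
\end{equation*}
The hypothesis $d_1 = 1$ enters through the fact that $\lambda_1 > \lambda_i$ for every $i \ge 2$, which gives the key moment generating function identity $\int_0^\infty g(w)\, e^{w/(2\lambda_1^2)}\, dw = \E[e^{W/(2\lambda_1^2)}] = \prod_{i=2}^d (1-\lambda_i^2/\lambda_1^2)^{-1/2} = K(\Gamma^2)$.

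I would split $M(z) = M_1(z) + M_2(z)$ at $w = z/2$. On $[0, z/2]$ the crude bound $\sqrt{z/(z-w)} \le \sqrt{2}$ combined with the above identity yields $M_1(z) \le \sqrt{2}\, K(\Gamma^2)$ at once, so the real work is bounding $M_2(z)$. For this I would combine the trigonometric substitution $w = z \sin^2\phi$, under which $dw/\sqrt{w(z-w)} = 2\, d\phi$ and $\sqrt{z/(z-w)} = 1/\cos\phi$, with the factorization $e^{w/(2\lambda_1^2)} = e^{w/(2\lambda_2^2)} e^{-\beta w}$ where $\beta := 1/(2\lambda_2^2) - 1/(2\lambda_1^2) > 0$. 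In the base case $d = 2$, the density is explicit, $g(w) = (2\pi w \lambda_2^2)^{-1/2} e^{-w/(2\lambda_2^2)}$, and $M_2$ reduces to $(\lambda_2\sqrt{2\pi})^{-1}\sqrt{z}\int_{\pi/4}^{\pi/2} 2\,e^{-\beta z\sin^2\phi}\, d\phi$. Using $e^{-\beta z \sin^2\phi} \le e^{-\beta z/2}$ and maximizing $\sqrt{z}\, e^{-\beta z/2}$ over $z > 0$ at $z = 1/\beta$ (with value $e^{-1/2}/\sqrt{\beta}$), one gets, after noting $\sqrt{1/\beta} = \sqrt{2}\, \lambda_1 \lambda_2/\sqrt{\lambda_1^2-\lambda_2^2}$, a dimensional multiple of $\lambda_1/\sqrt{\lambda_1^2 - \lambda_2^2} = K(\Gamma^2)$. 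For $d \ge 3$, one first bounds $g$ pointwise — via (\ref{eq44}) when the multiplicity $d_1'$ of the largest eigenvalue of $\Gamma'^2 := \mathrm{diag}(\lambda_2^2,\ldots,\lambda_d^2)$ is at least $2$, or via the lemma applied inductively in dimension $d-1$ when $d_1'=1$ — and then applies the same scheme.

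The main obstacle is ensuring that the constant depends only on $d$. A naive inductive application produces a bound proportional to $K(\Gamma'^2)\, \lambda_1/\sqrt{\lambda_1^2 - \lambda_2^2}$, whose ratio to $K(\Gamma^2)$ equals $\prod_{i=3}^d \sqrt{(1-\lambda_i^2/\lambda_1^2)/(1-\lambda_i^2/\lambda_2^2)}$ and blows up whenever some $\lambda_i$ with $i \ge 3$ approaches $\lambda_2$. To get around this I would insert the explicit density representation
$$ g(w) = \frac{w^{(d-3)/2}}{2(2\pi)^{(d-1)/2}\prod_{i=2}^d \lambda_i}\int_{S^{d-2}} \exp(-\tfrac{w}{2}\textstyle\sum_{i=2}^d \omega_i^2/\lambda_i^2)\, d\sigma(\omega) $$
directly into $M_2(z)$ and swap the integrals; the $w$-phase becomes $w\sum\omega_i^2 c_i$ with $c_i := (\lambda_1^2 - \lambda_i^2)/(2\lambda_i^2\lambda_1^2) > 0$ (minimized at $\omega = \pm e_2$), and a Laplace-type analysis of the transverse Gaussian fluctuations around $\pm e_2$ produces precisely the factor $\prod_{i=3}^d (2c_i\lambda_i^2)^{-1/2}$, which combined with the $\omega_2$-direction contribution $(2c_2\lambda_2^2)^{-1/2}$ reconstructs $K(\Gamma^2) = \prod_{i=2}^d (2c_i\lambda_i^2)^{-1/2}$. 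Equivalently, the Esscher tilt $\tilde g(w) := g(w)\,e^{w/(2\lambda_1^2)}/K(\Gamma^2)$ is itself the density of a sum $\tilde W$ of independent scaled $\chi_1^2$'s, and the problem becomes the scale-invariant bound $\tilde\E[\sqrt{z/(z-\tilde W)}\, \mathbf{1}\{\tilde W < z\}] \le C_3$ uniformly in $z>0$ and in the weights; this uniform-in-spectrum step is where the bulk of the technical work concentrates.
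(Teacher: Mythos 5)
Your setup is sound and the $d=2$ case is complete: the split of $M=M_1+M_2$ at $w=z/2$, the moment-generating-function identity $\int_0^\infty g(w)e^{w/(2\lambda_1^2)}\,dw = K(\Gamma^2)$ (which gives $M_1 \le \sqrt{2}K(\Gamma^2)$ in one line), and the $\sin^2\phi$ substitution together with optimizing $\sqrt{z}\,e^{-\beta z/2}$ all check out. The Esscher reformulation — that $\tilde g(w)=g(w)e^{w/(2\lambda_1^2)}/K(\Gamma^2)$ is the density of a sum $\tilde W$ of independent scaled $\chi^2_1$'s with weights $\tilde\lambda_i^2=\lambda_1^2\lambda_i^2/(\lambda_1^2-\lambda_i^2)$, so the claim is equivalent to $\E\bigl[\sqrt{z/(z-\tilde W)}\,\mathbf{1}\{\tilde W<z\}\bigr]\le C_3(d)$ uniformly in $z>0$ and in the weights — is correct and quite illuminating.

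However, for $d\ge 3$ the proposal has a genuine gap. You correctly diagnose that a naive induction (bound $g$ by $f_{d_1'}(\cdot/\lambda_2^2)/\lambda_2^2$ times $K(\Gamma'^2)$ and then convolve once) produces the spurious factor $\prod_{i\ge 3}\sqrt{(1-\lambda_i^2/\lambda_1^2)/(1-\lambda_i^2/\lambda_2^2)}$, which is unbounded as $\lambda_i\uparrow\lambda_2$. But the two fixes you offer — the spherical integral with a Laplace analysis around $\pm e_2$, and the scale-invariant Esscher bound — are both sketches of where the work would go, not proofs. In particular the uniform-in-weights estimate for the Esscher expectation is precisely the hard content; it requires controlling the density of $\tilde W$ on $[z/2,z]$ uniformly over all weight configurations, and you leave it open, acknowledging that "this is where the bulk of the technical work concentrates."

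The paper's proof takes a different and cleaner route that sidesteps the obstruction entirely. Rather than convolving the leading $\chi^2_1$ component against the density $g$ of everything else at once, it peels off one distinct-eigenvalue block at a time. Step (i) proves, by a direct computation with the same midpoint split and a beta-function evaluation, that when there are exactly two distinct eigenvalues with multiplicities $1$ and $d_2$, the density of $\eta_1^2 + \rho_2^2 R_2$ is bounded by $C_4(d_2)(1-\rho_2^2)^{-d_2/2}f_1(z)$ — i.e. dominated by the one-dimensional $f_1$ kernel with exactly the right $K$-factor. Step (ii) then convolves this $f_1$-dominated density against the next block's $f_{d_3}(\cdot/\rho_d^2)$ kernel, and because the inductive hypothesis is stated in terms of $f_1$ (not $f_{d'}$ at the scale $\lambda_2^2$), the same step (i) computation applies verbatim and contributes the factor $C_4(d_3)(1-\rho_d^2)^{-d_3/2}$. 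Iterating gives $\prod_r C_4(d_r)$ times $K(\Gamma^2)$, with no cross-ratio of $K$'s. The decisive difference is that the paper always normalizes against $f_1(z/\lambda_1^2)$, so each inductive step multiplies by exactly the missing piece of $K(\Gamma^2)$ rather than by $K$ of a wrong matrix. Your MGF observation is a nice conceptual shortcut for the $M_1$ half, but to make the $M_2$ half work you would either need to carry out the Laplace analysis in full or switch to the paper's block-by-block induction.
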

\begin{proof} To simplify notation we set  $\rho_i^2 =\lambda_{i}^2/\lambda_1^2 <1, 2 \le i \le d.$\\[.1cm]
(i)  We first look at the case where the eigenvalue $\lambda_2^2$ has multiplicity $d-1$ so that there are only two different eigenvalues. 
In this case it follows from (\ref{fourier2}) that
$$|Y|^2/\lambda_1^2 = \eta^2_1 + \rho_2^2 R_2,$$
where the random variables   $\eta_1^2$ and $R_2=\sum_{i=2}^d \eta_i^2$ are independent and  
	 have chi-square distributions with $1$ and $d_2:=d-1$ degrees of freedom, respectively. \\[.1cm] Consequently, $\eta_1^2$ and $\rho_2^2 R_2$ have densities $f_1$ and $y \mapsto f_{d_2}(y/\rho_2^2)/\rho_2^2$, respectively.\\
Applying  the convolution formula   we see that the density of $|Y|^2/\lambda_1^2 $ is equal to	$$
		g(z,\rho_2,d_2):=
		\rho_2^{-2}\int_0^z f_1(z-y)f_{d_2}(y/\rho_2^2)dy, z \ge 0.
	$$
	We can infer that
	\begin{eqnarray*}
      &&\frac{g(z,\rho_2,d_2)}{f_1(z)} 
		=
		\frac{z^{d_2/2- 1}}{2^{d_2/2}\Gamma(d_2/2)\rho_2^{d_2}} \int_0^z (1-y/z)^{-1/2} (y/z)^{d_2/2 -1} e^{-(\rho_2^{-2} -1)y/2} dy\\
	&\le& 
		\frac{1}{2^{d_2/2}\Gamma(d_2/2)\rho_2^{d_2}}\bigg[\sqrt{2}\int_0^{z/2}\frac{ y^{d_2/2 -1}}{ e^{(\rho_2^{-2} -1)y/2}} dy \\
		&&\hspace{3cm}
		+  
		\frac{ z^{d_2/2}}{e^{(\rho_2^{-2} -1)z/4}}\int_{1/2}^1 t^{d_2/2-1} (1-t)^{-1/2}dt\bigg]\\
	&\le&\frac{1}{2^{d_2/2}\Gamma(d_2/2)\rho_2^{d_2}} \left[\sqrt{2}\int_0^{\infty}\frac{ y^{d_2/2 -1}}{ e^{(\rho_2^{-2} -1)y/2}} dy 
		+  
		\frac{ z^{d_2/2}}{e^{(\rho_2^{-2} -1)z/4}}B(1/2, d_2/2)\right]\\
		&=&(1-\rho_2^2)^{-d_2/2} \left[\sqrt{2}+ \left(z(\rho_2^{-2}-1)/2\right)^{d_2/2}e^{-(\rho_2^{-2} -1)z/4}\frac{\Gamma(1/2)}{\Gamma((d_2+1)/2)}\right]
	\end{eqnarray*}
Recall that we have for the beta function 
$$B(\alpha_1,\alpha_2):=\int_0^1 t^{\alpha_1-1}(1-t)^{\alpha_2-2}dt=\frac{\Gamma(\alpha_1)\Gamma(\alpha_2)}{\Gamma(\alpha_1 + \alpha_2)}, \alpha_1,\alpha_2 >0.$$
Using that $\Gamma(1/2)=\sqrt{\pi}$ and
$t^{d_2/2}e^{-t/2} \le (d_2/e)^{d_2/2}, t > 0$,
we find that 
$$g(z,\rho_2,d_2) \le  C_4(d_2)(1-\rho_2^2)^{-d_2/2} f_1(z),$$
where $C_4(d_2) =\sqrt{2} + \sqrt{\pi}(d_2/e)^{d_2/2} /\Gamma((d_2+1)/2).$ \\[.1cm]
As we have $h(z)= g(z/\lambda_1^2,\rho_2,d_2)/\lambda_1^2, z > 0$ and $K(\Gamma^2) = (1 - \rho_2^2)^{-d_2/2}$ in this case, we obtain the assertion if there are only two different eigenvalues.\\[.2cm]
(ii) Suppose now that there are three different eigenvalues $\lambda_1^2 > \lambda_2^2 > \lambda_d^2$ with respective multiplicities $1, d_2$ and $d_3$ so that $d_2 + d_3 = d-1.$ Then:
$$|Y|^2/\lambda_1^2 = \eta_1^2 + \rho_2^2\sum_{i=2}^{d_2+1}\eta_i^2 +  \rho_d^2\sum_{i=d_2+2}^{d}\eta_i^2 =:(\eta^2_1 + \rho_2^2 R_2) + \rho_d^2 R_3,$$
where we already know by (i)  that the density $h_1$ of $\eta^2_1 + \rho_2^2 R_2$ satisfies
$$h_1(z) \le C_4(d_2)(1 -\rho_2^2)^{-d_2/2} f_1(z), z > 0.$$
Denoting the density of $|Y|^2/\lambda_1^2$ by $h_2$, we have,
\begin{eqnarray*}
h_2(z)/f_1(z)  &=& \rho_d^{-2}\int_0^z h_1(z-y)f_{d_3}(y/\rho_d^2)dy /f_1(z)\\
&\le&C_4(d_2)(1 -\rho_2^2)^{-d_2/2}g(z,\rho_d,d_3)\\
&\le& C_4(d_2)C_4(d_3)(1 -\rho_2^2)^{-d_2/2}(1 -\rho_d^2)^{-d_3/2}.
\end{eqnarray*}
It follows that 
$$h(z)=h_2(z/\lambda_1^2)/\lambda_1^2  \le C_4(d_2)C_4(d_3) K(\Gamma^2) f_1(z/\lambda_1^2)/\lambda_1^2, z > 0.$$
(iii) If $\Gamma^2$ has $ r $ different eigenvalues $\lambda^2_i$ with multiplicities $d_1=1$ and $d_i, 2 \le i \le r$ we get via induction:
$$h(z) \le \prod_{i=2}^r C_4(d_i) K(\Gamma^2) f_1(z/\lambda_1^2)/\lambda_1^2, z > 0.$$ Setting $C_3(d) =\max\{ \prod_{i=2}^r C_4(d_i): d_2 + \ldots + d_r = d-1\}$  the assertion of the lemma follows. 
\end{proof}
\noindent Integrating the inequalities (\ref{eq43}), (\ref{eq44}) and (\ref{d_1=1}), one obtains upper and lower bounds for the probabilities $\PP\{|Y| \ge t\}$, but  the lower bound is only valid if  $$|t| \ge \sqrt{2d_1} \E[|Y|^2]^{1/2}/(1-\lambda_{d_1 +1}^2/\lambda_1^2)^{1/2}.$$
This can be a problem if  one looks at a sequence $Y_n: \Omega \to \R^d$ of random vectors such that $Y_n$ is $\mathcal{N}(0,\Gamma_n^2)$-distributed and where
$
\Gamma_n \to \Gamma$
for    non-negative definite symmetric matrices $\Gamma_n, n \ge 1$ and  $\Gamma$. \\
Let $\lambda_{n,1} \ge \ldots \ge \lambda_{n,d}$ be the eigenvalues of $\Gamma_n, n \ge 1$.
Then we have as $n \to \infty,$
\be \label{ga2_n}
\lambda_{n,i} \to \lambda_i, 1 \le i \le d,
\ee
where $\lambda_1\ge \ldots \ge \lambda_d$ are the eigenvalues of $\Gamma$. (For a proof see, for instance, Lemma A.1 in \cite{Ein1991}.) \\
 The multiplicity $d_{n,1}$ of the largest eigenvalue of $\Gamma_n$ can be different from the multiplicity $d_1$ of the largest eigenvalue $\lambda_1$ of $\Gamma$ and we do not have necessarily that $d_{n,1} \to d_1$.
 Moreover, it is possible that   $(1 - \lambda_{n,d_{n,1}+1}^2/\lambda_{n,1}^2) \to 0$ so that we do not always have a lower bound for $\PP\{|Y_n| \ge t\}$ if $t$ is  small.\\
 
\subsection{Better bounds}
To overcome this difficulty we define for any given $t \ge 3d \lambda_1$ a regularized version $Y_t: \Omega \to \R^d$ of the random vector $Y: \Omega \to \R^d$ where we merge all eigenspaces  determined by  the eigenvalues of $\Gamma$ which are sufficiently close to the largest eigenvalue of $\Gamma$.  \\
Let $\{e_1,\ldots,e_d\} \subset \R^d$  and $\eta_1, \ldots, \eta_d: \Omega \to \R$ be defined as in (\ref{fourier1}).\\
We set
\be \label{Y_t}
Y_{t} = \sum_{i=1}^{\tilde{d}} \lambda_{1}\eta_{i} e_{i} + \sum_{i=\tilde{d} +1}^{d} \lambda_{i} \eta_{i} e_{i},\ee
where
$$\tilde{d}=\tilde{d}(t)= \max\{1 \le i \le d: \lambda_{1}^2 - \lambda_{i}^2 \le 4d^2 \lambda_{1}^4 t^{-2}\}.$$
Since we are assuming that $t \ge 3d\lambda_{1},$ we  have in particular
\be \label{(3.8)}
\lambda^2_{\tilde{d}} \ge \lambda^2_{1}/2.
\ee
It is easy to see from the above definition that  $Y_t$ is normally distributed with mean zero. 
Writing the covariance matrix of $Y_t$ as $\Gamma_t^2$, where $\Gamma_t$ is symmetric and non-negative definite,  the eigenvalues  $\lambda_{t,1}\ge \ldots \ge \lambda_{t,d}$ of the  matrix $\Gamma_{t}$  are related to the eigenvalues of $\Gamma$ as follows,
$$\lambda_{t,i}=\lambda_{1}, 1 \le i \le \tilde{d}$$
and if $\tilde{d} < d$, we further have
$$
\lambda_{t,i}=\lambda_{i}, \tilde{d} < i \le d.
$$
Note in particular that the largest eigenvalue of $\Gamma_{t}$ has multiplicity $\tilde{d}$.\\
Moreover, we trivially have,
\be \label{bigger}
|Y|^2 \le |Y_{t}|^2
\ee 
Letting  $h$ and $h_{t}$ be the densities  of $|Y|^2$ and $|Y_{t}|^2,$ respectively,
we can infer from  Lemma 2(b) in \cite{Ein1991}  and  (\ref{(3.8)}) that
\be \label{237}
h(z) \ge h_{t}(z)\exp(-8d^3 z/t^2) , z > 0.
\ee
\begin{lem}\label{LemDensBound}
Assume that $t \ge 3d\lambda_{1}$ and that $Y_{t} $  is defined as above. Then the following holds:
\begin{itemize}
\item[(i)] If $0 < \delta \le t/4,$
we have:
 \be \label{eq39}
\PP\{|Y_{t}| \ge  t +\delta\} \le  4 C_3 e^{d/4} \exp(-t\delta/\lambda_{1}^2)  \PP\{|Y_{t}| \ge  t\}
\ee
\be \label{eq39a}
\PP\{|Y_{t}| \ge  t -\delta\} \le  6 C_3 \exp(t\delta/\lambda_{1}^2)  \PP\{|Y_{t}| \ge  t\}
\ee
\item[(ii)] If $t \ge C_5 \lambda_{1}$, where $C_5= 4\log(8C_3)^{1/2}\vee 3d$ and  $\beta  = \lambda_{1}^2  (\log (8C_3) +d/4),$ we have:
\be \label{lem2310}
\PP\{|Y_{t}| \ge t\} \le 2 \PP\{ t \le |Y_{t}| \le  t + \beta /t\}
\ee
\end{itemize}

	\end{lem}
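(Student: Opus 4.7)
The plan is to sandwich $\PP\{|Y_t|\ge s\}$ between multiples of the tail $\PP\{\lambda_1 W^{1/2}\ge s\}$, where $W$ is chi-squared with $\tilde d$ degrees of freedom ($\tilde d$ = multiplicity of the largest eigenvalue of $\Gamma_t$). Combining Lemma~\ref{LemDensBound-1} (applicable when $\tilde d=1$) with (\ref{eq44}) (applicable when $\tilde d\ge 2$), one gets the uniform upper density bound $h_t(z)\le C_3 K(\Gamma_t^2)f_{\tilde d}(z/\lambda_1^2)/\lambda_1^2$ for $z>0$. For the matching lower bound, I apply (\ref{eq43}) to $Y_t$: since $\E|Y_t|^2\le d\lambda_1^2$ and, by the very definition of $\tilde d$ (when $\tilde d<d$), $\lambda_1^2-\lambda_{\tilde d+1}^2>4d^2\lambda_1^4/t^2$, the relevant threshold $2\tilde d\E|Y_t|^2/(1-\lambda_{\tilde d+1}^2/\lambda_1^2)$ is at most $\tilde d\,t^2/(2d)\le t^2/2$. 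Hence $h_t(z)\ge \tfrac14 K(\Gamma_t^2)f_{\tilde d}(z/\lambda_1^2)/\lambda_1^2$ whenever $z\ge t^2$. Integrating over $[s^2,\infty)$ (resp.\ $[t^2,\infty)$) and substituting $u=x^2$ in $\int f_{\tilde d}(u)du$ reduces the task to estimating
$$R(s)=\Biggl(\int_{s/\lambda_1}^\infty x^{\tilde d-1}e^{-x^2/2}dx\Biggr)\Bigg/\Biggl(\int_{t/\lambda_1}^\infty y^{\tilde d-1}e^{-y^2/2}dy\Biggr),$$
with $\PP\{|Y_t|\ge s\}/\PP\{|Y_t|\ge t\}\le 4C_3\, R(s)$.

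For the first inequality of (i), take $s=t+\delta$ and substitute $x=y+\delta/\lambda_1$ in the numerator. Since $\delta\le t/4$ and $y\ge t/\lambda_1$, one has $\delta/\lambda_1\le y/4$, so $(y+\delta/\lambda_1)^{\tilde d-1}\le(5/4)^{\tilde d-1}y^{\tilde d-1}$, while $e^{-(y+\delta/\lambda_1)^2/2}\le e^{-y^2/2}e^{-y\delta/\lambda_1}\le e^{-y^2/2}e^{-t\delta/\lambda_1^2}$. Using $\log(5/4)<1/4$ gives $R(t+\delta)\le(5/4)^{\tilde d-1}e^{-t\delta/\lambda_1^2}\le e^{d/4}e^{-t\delta/\lambda_1^2}$, yielding the asserted bound $4C_3 e^{d/4}\exp(-t\delta/\lambda_1^2)$. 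For the second inequality, take $s=t-\delta$ and estimate both integrals by integration by parts: for $a\ge 2\tilde d$, $\int_a^\infty x^{\tilde d-1}e^{-x^2/2}dx = a^{\tilde d-2}e^{-a^2/2}\cdot\Theta(\tilde d,a)$, where $\Theta\le 32/31$ from above for $\tilde d\ge 2$ and $\Theta\in[8/9,1]$ via Mills for $\tilde d=1$. Because $t/\lambda_1\ge 3d\ge 2\tilde d$ and $(t-\delta)/\lambda_1\ge (3/4)(t/\lambda_1)$, combining the upper estimate at $a=(t-\delta)/\lambda_1$ with the lower estimate at $a=t/\lambda_1$, together with $(a^2-(a')^2)/2\le t\delta/\lambda_1^2$, produces $R(t-\delta)\le(3/2)e^{t\delta/\lambda_1^2}$; the worst case is $\tilde d=1$, where an extra factor $t/(t-\delta)\le 4/3$ appears. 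Multiplying by $4C_3$ gives the advertised $6C_3\exp(t\delta/\lambda_1^2)$.

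Part (ii) then follows by applying the first inequality of (i) with $\delta=\beta/t$. The hypothesis $t\ge C_5\lambda_1$ with $C_5=4\sqrt{\log(8C_3)}\vee 3d$ is tailored so that $t^2\ge 4\beta$, i.e.\ $\delta\le t/4$; the two parts of the maximum in $C_5$ separately dominate the $4\log(8C_3)$ and the $d$ contributions to $4\beta/\lambda_1^2$. Then
$$\PP\{|Y_t|\ge t+\beta/t\}\le 4C_3 e^{d/4}\exp(-\beta/\lambda_1^2)\,\PP\{|Y_t|\ge t\},$$
and the choice $\beta=\lambda_1^2(\log(8C_3)+d/4)$ makes the prefactor exactly $1/2$. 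Hence $\PP\{t\le|Y_t|\le t+\beta/t\}=\PP\{|Y_t|\ge t\}-\PP\{|Y_t|\ge t+\beta/t\}\ge\tfrac12\PP\{|Y_t|\ge t\}$, which rearranges to the stated inequality. The main technical point is the threshold verification for (\ref{eq43}) at $z=t^2$: it is precisely the eigenvalue gap $\lambda_1^2-\lambda_{\tilde d+1}^2>4d^2\lambda_1^4/t^2$ built into the regularization (\ref{Y_t}) that forces the threshold below $t^2/2$, without which one could not integrate the lower density bound from $z=t^2$ and the reduction to a pure chi-tail ratio would break down.
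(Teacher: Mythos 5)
Your proposal is correct, and part (ii) matches the paper exactly (apply (\ref{eq39}) with $\delta = \beta/t$, solve for the prefactor to be $1/2$). For part (i), though, you take a genuinely different route from the paper. The paper stays entirely at the density level: writing $\PP\{|Y_t|\ge t\pm\delta\}\le\PP\{|Y_t|^2\ge t^2\pm 2t\delta\}=\int_{t^2}^\infty h_t(z\pm 2t\delta)\,dz$ and bounding the \emph{pointwise shifted density ratio} $h_t(z\pm 2t\delta)/h_t(z)\le 4C_3\,f_{\tilde d}((z\pm 2t\delta)/\lambda_1^2)/f_{\tilde d}(z/\lambda_1^2)$ for $z\ge t^2$ via (\ref{eq43a})--(\ref{eq44a}); the explicit form of $f_{\tilde d}$ then gives $(1\pm 2\delta/t)^{\pm}\exp(\mp t\delta/\lambda_1^2)$ immediately, uniformly for both the $t+\delta$ and $t-\delta$ cases, with no need to evaluate or asymptotically estimate any tail integral. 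You instead integrate both density bounds separately and compare the resulting chi-tail ratio $R(s)$; for $s=t+\delta$ a translation substitution reproduces the paper's estimate, but for $s=t-\delta$ you resort to integration by parts and the Mills ratio, which the paper avoids entirely. Both yield the same constants, so the density-ratio argument is the more economical one here. Two minor inaccuracies in your write-up, neither fatal: the claimed lower bound $\Theta\ge 8/9$ for $\tilde d=1$ via Mills requires $a\ge 2\sqrt 2$, not merely $a\ge 2\tilde d=2$ (it is saved by $a=t/\lambda_1\ge 3d\ge 6$, so state that condition rather than $a\ge 2\tilde d$); and the remark that ``the two parts of the maximum in $C_5$ separately dominate'' the two contributions to $4\beta/\lambda_1^2$ is loose, since a maximum does not dominate a sum — what actually works is $\max(16\log(8C_3),9d^2)\ge 8\log(8C_3)+\tfrac{9}{2}d^2\ge 4\log(8C_3)+d$ using $d\ge 2$.
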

\begin{proof}
(i) 
If $\tilde{d} < d$
 we  have by (\ref{eq44}) and Lemma \ref{LemDensBound-1},
\be \label{eq43a}
h_{t}(z) \le C_3(\tilde{d}) K(\Gamma_{t}^2) f_{\tilde{d}}(z/\lambda_{1}^2)/\lambda_{1}^2, z > 0. 
\ee
This is also correct if $d=\tilde{d}$ and $\Gamma_t$ is a diagonal matrix. Then we have $K(\Gamma_t^2)=1$ and the inequality is trivial since $C_3(d) > 1$ and $h_t(z)=f_{\tilde{d}}(z/\lambda_{1}^2)/\lambda_{1}^2, z > 0.$\\[.1cm]
If $\tilde{d} < d$ we have,
$$2\tilde{d}\E[|Y_{t}|^2]/(1-\lambda^2_{\tilde{d}+1}/\lambda^2_{1}) \le  2d^2\lambda_{1}^4 /(\lambda_{1}^2 - \lambda_{\tilde{d}+1}^2) \le  t^2/2$$
so that 
\be \label{eq44a}
h_{t}(z) \ge (K(\Gamma_{t}^2)/4)f_{\tilde{d}}(z/\lambda_{1}^2)/\lambda_{1}^2, z \ge t^2/2
\ee
which of course  also holds if $\tilde{d}=d.$\\
Further note that 
$$\PP\{|Y_{t}| \ge t + \delta\} \le \PP\{|Y_{t}|^2 \ge t^2 + 2t\delta\} =\int_{t^2}^{\infty} h_{t}(z+2t\delta)dz.$$
Combining inequalities (\ref{eq43a}) and  (\ref{eq44a}), we get for $z \ge t^2,$
\begin{eqnarray*}
h_{t}(z+2\delta t)/h_{t}(z) &\le& 4C_3(\tilde{d}) (1+2\delta/t)^{d/2} \exp(-t\delta/\lambda_{1}^2)\\
&\le& 4C_3(\tilde{d})\exp(d/4)\exp(-t\delta/\lambda_{1}^2).
\end{eqnarray*}
 $C_3(d)$ is monotone in $d$ by definition and we see that (\ref{eq39}) holds. \\[.1cm]
To prove (\ref{eq39a}) we note that 
$$\PP\{|Y_{t}| \ge t - \delta\} \le \PP\{|Y_{t}|^2 \ge t^2 - 2t\delta\} =\int_{t^2}^{\infty} h_{t}(z-2t\delta)dz.$$
 Since $\delta \le t/4,$ we get for $z \ge t^2,$
$$h_{t}(z-2\delta t)/h_{t}(z) \le 4C_3(\tilde{d}) (1-2\delta/t)^{-1/2} \exp(t\delta/\lambda_{1}^2)\le 6C_3(d) \exp(t\delta/\lambda_{1}^2)$$ 
and we see that (\ref{eq39a}) holds as well.\\[.2cm]
(ii) Applying  (\ref{eq39}) with $\delta =\beta/t= \lambda_{1}^2(\log(8C_3)+d/4)/t$, we see that
$$\PP\{|Y_{t}| \ge t\ + \beta/t\} \le  \PP\{|Y_{t}| \ge t\}/2,$$
where $\delta \le t/4$ since we are assuming that $t \ge C_5 \lambda_{1}.$
\end{proof}
\begin{lem}\label{lem234}
 Let $Y$ be an  $\mathcal{N}( 0, \Gamma^2)$-distributed random vector and let for any given  $t \ge C_5\lambda_{1}$ the random vector $Y_t$ be defined as in (\ref{Y_t}).
\begin{itemize}
\item [(i)] We have for any $0 \le \gamma < t^2/(4\lambda_1^2),$
$$\PP\{|Y| \ge t - \gamma \lambda_{1}^2/t\} \le 6C_3e^{\gamma} 
\PP\{|Y_{t}| \ge t\}.$$
\item[(ii)] Let $\beta$ be chosen as in Lemma \ref{LemDensBound}(ii). Then we also have:
$$\PP\{|Y_{t}| \ge t\} \le 2\exp(16d^3) 
\PP\{ t \le |Y| \le  t+\beta/t\}.$$
\end{itemize}
\end{lem}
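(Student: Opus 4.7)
The plan is to handle the two parts separately, each by reducing to one of the inequalities already established for $Y_t$ and its density.

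For part (i), I would begin with the domination $|Y|\le |Y_t|$ from (\ref{bigger}), which immediately gives
\[
\PP\{|Y|\ge t - \gamma \lambda_1^2/t\} \le \PP\{|Y_t|\ge t - \gamma \lambda_1^2/t\}.
\]
Now set $\delta = \gamma \lambda_1^2/t$. The assumption $\gamma < t^2/(4\lambda_1^2)$ is exactly $\delta < t/4$, which is the hypothesis for (\ref{eq39a}). Substituting into (\ref{eq39a}) yields $6C_3 \exp(t\delta/\lambda_1^2) = 6C_3\,e^{\gamma}$, finishing (i).

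For part (ii), the starting point is Lemma \ref{LemDensBound}(ii), which since $t\ge C_5\lambda_1$ gives
\[
\PP\{|Y_t|\ge t\} \le 2\,\PP\{t\le |Y_t|\le t+\beta/t\} = 2\int_{t^2}^{(t+\beta/t)^2}h_t(z)\,dz.
\]
I would then transfer the integrand from $h_t$ to $h$ via the density comparison (\ref{237}), which says $h(z) \ge h_t(z)\exp(-8d^3 z/t^2)$. The key preliminary step is to show that in the range of integration we have $z/t^2 \le 2$; this is where the definition of $C_5$ does its work. Indeed, the condition $t\ge C_5\lambda_1$ with $C_5 = 4\log(8C_3)^{1/2}\vee 3d$ is designed precisely so that $\beta/t^2 \le 1/4$, whence $(t+\beta/t)^2/t^2 \le (5/4)^2 < 2$. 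Consequently, for every $z\in[t^2,(t+\beta/t)^2]$,
\[
\exp(-8d^3 z/t^2) \ge \exp(-16d^3).
\]
Substituting this lower bound into (\ref{237}) and integrating then gives
\[
\PP\{t\le |Y|\le t+\beta/t\} \ge \exp(-16d^3)\,\PP\{t\le |Y_t|\le t+\beta/t\},
\]
and combining with the initial inequality produces the required $2\exp(16d^3)$ factor.

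The conceptually hard work has already been done in Lemma \ref{LemDensBound-1} (pointwise density bound when $d_1=1$), in Lemma \ref{LemDensBound} (density shifting and the absorption factor $2$), and in inequality (\ref{237}) (comparing $h$ and $h_t$). So here the only technical obstacle is the bookkeeping around the relevant range of $z/t^2$; this is handled purely by the calibration of the constant $C_5$ and of $\beta$. No further analytic work should be needed.
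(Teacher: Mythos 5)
Your proof is correct and follows essentially the same route as the paper: part (i) combines the domination $|Y|\le|Y_t|$ with (\ref{eq39a}) at $\delta=\gamma\lambda_1^2/t$, and part (ii) starts from (\ref{lem2310}), bounds $(t+\beta/t)^2\le 2t^2$ via $\beta/t\le t/4$, and transfers from $h_t$ to $h$ using (\ref{237}). No gaps.
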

\begin{proof}
(i) By relation (\ref{bigger}) we trivially have
 for any $y > 0,$
$$\PP\{|Y| \ge y\} \le  \PP\{|Y_{t}| \ge y\}.$$
Applying inequality (\ref{eq39a}) with  $\delta= \gamma\lambda_{1}^2/t,$  we can conclude that
$$
\PP\{|Y_t| \ge t - \gamma \lambda_{1}^2/t\} \le 6C_3e^{\gamma} \PP\{|Y_{t}| \ge t \}.
$$ 
(ii) Observe that we have by (\ref{lem2310})
$$
\PP\{|Y_{t}| \ge t \}  \le 2 \PP\{t \le |Y_t| \le  t+ \beta /t\}.$$
Recalling that $\beta/t \le t/4$  we  see that $( t +\beta /t)^2 \le 2t^2$ if $t \ge  C_5\lambda_{1}.$\\
Finally, relation (\ref{237}) implies that
\begin{eqnarray*}
&&\PP\{ t \le |Y_{t}| \le  t+ \beta /t\}\\
&\le & \int_{t^2}^{(t+\beta/t)^2} h_{t}(z) dz  \le  \exp(16d^3)\int_{t^2}^{(t+\beta/t)^2} h(z) dz
\\ & = &\exp(16d^3)\PP\{t \le |Y| \le t+ \beta /t\}. 
\end{eqnarray*}
and the lemma has been proven.
\end{proof}
\noindent Combining the two above lemmas, we obtain the main result of this section:
\begin{theo} \label{LemTailBoundsDistriGammanZn}
Let $Y: \Omega \to \R^d$ be an $\mathcal{N}(0,\Gamma^2)$-distributed random vector where $\Gamma$ is a symmetric non-negative definite matrix with eigenvalues $\lambda_1 \ge \ldots \ge \lambda_d$. \\Then we have for any $t \ge \tilde{C}_1 \lambda_{1}$
		
	\begin{itemize}
	\item[(i)]
	$\PP \{   \left| Y \right|  \ge  t\} 
			\le \tilde{C}_2 \prod_{i=2}^d \left\{ \frac{\lambda_{1}}{(\lambda_{1}^2-\lambda_{i}^2)^{1/2}} \wedge \frac{t}{\lambda_{1}}\right\}\frac{\lambda_{1}}{t}\exp\left(-\frac{t^2}{2\lambda_{1}^2}\right)
			 $
	\item[(ii)]
	$ \PP \{  |Y |  \ge  t\}
			  	\ge
		\tilde{C}_3 \prod_{i=2}^d \left\{ \frac{\lambda_{1}}{(\lambda_{1}^2-\lambda_{i}^2)^{1/2}} \wedge \frac{t}{\lambda_{1}}\right\}\frac{\lambda_{1}}{t}\exp\left(-\frac{t^2}{2\lambda_{1}^2}\right)$
	\item[(iii)] $\PP\{ | Y |  \ge   t - \gamma \lambda_{1}^2/t\}
	\le \tilde{C}_4 e^{\gamma} \PP\{    t \le | Y | 
		\le   t + \tilde{C}_5 \lambda_{1}^2/ t \}$ if $0 \le \gamma < t^2/(4\lambda_{1}^2)$
		\item[(iv)] $\PP\{    t \le | Y | 
		\le  \  t + \tilde{C}_5 \lambda_{1}^2/ t  \} \ge\tilde{C}_6  \prod_{i=2}^d \left\{ \frac{\lambda_{1}}{(\lambda_{1}^2-\lambda_{i}^2)^{1/2}} \wedge \frac{t}{\lambda_{1}}\right\}\frac{\lambda_{1}}{t}\exp\left(-\frac{t^2}{2\lambda_{1}^2}\right)$	\end{itemize}
where $\tilde{C}_i$, $1 \le i \le 6$ are positive constants  depending  on  $d$ only.
\end{theo}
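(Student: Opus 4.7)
The plan is to estimate $\PP\{|Y|\ge t\}$ via the regularized random vector $Y_t$ introduced in (\ref{Y_t}) and to match the resulting asymptotics to the product appearing in the statement. With $\tilde d=\tilde d(t)$ as in the definition of $Y_t$, the first step is the elementary comparison
\[
\prod_{i=2}^d \left\{\frac{\lambda_{1}}{(\lambda_{1}^2-\lambda_{i}^2)^{1/2}} \wedge \frac{t}{\lambda_{1}}\right\} \;\asymp\; \left(\frac{t}{\lambda_{1}}\right)^{\tilde d - 1} K(\Gamma_t^2),
\]
with constants depending only on $d$. Indeed, for $i\le \tilde d$ the defining inequality $\lambda_{1}^2-\lambda_{i}^2\le 4d^2\lambda_{1}^4/t^2$ forces the $i$-th factor to lie between $t/(2d\lambda_{1})$ and $t/\lambda_{1}$, while for $i>\tilde d$ the reverse inequality is in force and the minimum equals $\lambda_{1}/(\lambda_{1}^2-\lambda_{i}^2)^{1/2}$. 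Since $\lambda_{t,1}=\lambda_{1}$ and $\lambda_{t,i}=\lambda_{i}$ for $i>\tilde d$, the product over $i>\tilde d$ is precisely $K(\Gamma_t^2)$.

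Next I derive two-sided bounds on $\PP\{|Y_t|\ge t\}$. The density $h_t$ of $|Y_t|^2$ is, by (\ref{eq43a}) (which in the case $\tilde d=1$ calls on Lemma \ref{LemDensBound-1}), bounded above by $C_3 K(\Gamma_t^2)f_{\tilde d}(z/\lambda_{1}^2)/\lambda_{1}^2$, and by (\ref{eq44a}) bounded below by $(K(\Gamma_t^2)/4)f_{\tilde d}(z/\lambda_{1}^2)/\lambda_{1}^2$ for $z\ge t^2/2$. Integrating each over $[t^2,\infty)$ and applying (\ref{eq42}) to a $\tilde d$-dimensional standard Gaussian (which is legitimate as soon as $\tilde C_1\ge\max(C_5,2d)$), one obtains
\[
\PP\{|Y_t|\ge t\} \;\asymp\; K(\Gamma_t^2)\Bigl(\frac{t}{\lambda_{1}}\Bigr)^{\tilde d-2}\exp\!\Bigl(-\frac{t^2}{2\lambda_{1}^2}\Bigr),
\]
which, after multiplication by $\lambda_{1}/t$, is exactly the target expression in parts (i)--(iv).

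It then remains to link $|Y|$ with $|Y_t|$. Part (i) follows immediately from the monotonicity (\ref{bigger}) combined with the upper estimate on $\PP\{|Y_t|\ge t\}$. Part (iv) follows from Lemma \ref{lem234}(ii) with the identification $\tilde C_5=\beta/\lambda_{1}^2$: that lemma converts the lower bound on $\PP\{|Y_t|\ge t\}$ into a lower bound on $\PP\{t\le|Y|\le t+\tilde C_5\lambda_{1}^2/t\}$, and (ii) is then obtained from (iv) by monotonicity. Part (iii) is obtained by chaining Lemma \ref{lem234}(i) with Lemma \ref{lem234}(ii), both valid once $t\ge C_5\lambda_{1}$. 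The main technical hurdle is the careful bookkeeping in the first step: the corner cases $\tilde d=1$ (where Lemma \ref{LemDensBound-1} must replace (\ref{eq44})) and $\tilde d=d$ (where $K(\Gamma_t^2)=1$ by convention) have to be handled uniformly, and $\tilde C_1$ must be chosen large enough (namely $\tilde C_1\ge\max(C_5,3d)$) so that both $Y_t$ is well defined and (\ref{eq42}) applies in the reduced dimension $\tilde d$.
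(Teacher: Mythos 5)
Your proof is correct and follows essentially the same route as the paper: regularize $Y$ to $Y_t$, bound the density $h_t$ above by (\ref{eq43a}) and below by (\ref{eq44a}), integrate and apply (\ref{eq42}) in dimension $\tilde d$, convert $(t/\lambda_1)^{\tilde d -1}K(\Gamma_t^2)$ back to the stated product via the definition of $\tilde d$ (the paper's identity (\ref{Kgamma})), and transfer from $Y_t$ to $Y$ through (\ref{bigger}) and Lemma \ref{lem234}. The only (harmless) organizational difference is that you establish (iv) directly from Lemma \ref{lem234}(ii) and then get (ii) by monotonicity, whereas the paper proves (ii) first and then obtains (iv) by setting $\gamma=0$ in (iii); both orderings rest on exactly the same ingredients.
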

\begin{proof} To verify (i) we first  note that
$$\PP\{|Y |  \ge  t\} \le \PP\{|Y_{t}| \ge t\}=\int_{t^2}^{\infty} h_{t}(z)dz.$$
If $\tilde{d} <d$, it follows from Lemma \ref{LemDensBound-1}, (\ref{eq44})  and (\ref{eq42}) that the last integral is
$$\le C_2 C_3  K(\Gamma_{t}^2)(t/\lambda_{1})^{\tilde{d} -2} \exp(-t^2/2\lambda_{1}^2).$$
If $\tilde{d}=d$,  the above bound follows directly from (\ref{eq42}). (Recall that $K(\Gamma^2)=1$ if $\Gamma$ is a diagonal matrix.)\\
By definition of $\tilde{d}$ we further have,
\be \label{Kgamma}
K(\Gamma^2_{t})(t/(2d\lambda_{1}))^{\tilde{d} -1} = \prod_{i=2}^d \{\lambda_{1}/(\lambda_{1}^2 -\lambda_{i}^2)^{-1/2} \wedge t/(2d\lambda_{1})\},
\ee
which is 
$$\le \prod_{i=2}^d \{\lambda_{1}/(\lambda_{1}^2 -\lambda_{i}^2)^{-1/2} \wedge t/\lambda_{1}\}.$$
(ii) Combining relations (\ref{eq44a}) and (\ref{eq42}), we can conclude that
$$\PP\{|Y_{t}| \ge t\} \ge \frac{C_1}{4} K(\Gamma^2_{t})(t/\lambda_{1})^{\tilde{d} -2} \exp(-t^2/2\lambda_{1}^2).
$$
where we  assume w.l.o.g. that $C_1=C_1(d)$ is non-increasing in $d$.\\
In view of (\ref{Kgamma}) we  have,
$$
K(\Gamma^2_{t})(t/(2d\lambda_{1}))^{\tilde{d} -1} \ge 2^{-d}d^{-d} \prod_{i=2}^d \{\lambda_{1}/(\lambda_{1}^2 -\lambda_{i}^2)^{-1/2} \wedge t/\lambda_{1}\}.
$$
 Combining this inequality with the fact that
 $$2\PP\{|Y| \ge t\} \ge \exp(-16d^3)\PP\{|Y_{t}| \ge t\},$$
 which follows from Lemma \ref{lem234}(ii), we readily obtain  (ii).\\
(iii) This part follows directly combining parts (i) and (ii) of Lemma \ref{lem234}.\\
(iv) Set $\gamma=0$ in (iii) and apply the lower bound given in (ii).
\end{proof}

\section{Remaining Proofs}\label{ProofMainTheorem}
By a standard argument (see, for instance,  Lemma~1 of \cite{Fe}) it is enough to prove our upper-lower class test for sequences $\phi_n$ which are non-decreasing and
satisfy 
\be \label{phi}
			\lambda_{n,1}^2 LLn  \le  \phi_n^2 \le 3\lambda_{n,1}^2 LLn.
\ee
Further recall that $\Gamma_n \to \Gamma$ implies that the eigenvalues $\lambda_{n,1} \ge \ldots \ge \lambda_{n,d}$ of $\Gamma_n$ converge to the corresponding eigenvalues $\lambda_{1} \ge \ldots \ge \lambda_{d}$ of $\Gamma$ (see (\ref{ga2_n})).\\[.2cm]	To simplify notation we set
$$\gamma_n:= \prod_{i=2}^d  \left(\frac{\lambda_{n,1}}{(\lambda_{n,1}^2-\lambda_{n,i}^2)^{1/2}} \wedge  \frac{\phi_n}{\lambda_{n,1}}\right) $$
If  $ d_1 < d$,  we have as $n \to \infty,$
$$\gamma_n \sim c  \prod_{i=2}^{d_1}  \left(\frac{\lambda_{n,1}}{(\lambda_{n,1}^2-\lambda_{n,i}^2)^{1/2}} \wedge  \frac{\phi_n}{\lambda_{n,1}}\right) =: c\gamma'_n,$$ 
where  $c:= \prod_{i=d_1+1}^d \frac{\lambda_{1}}{(\lambda_{1}^2-\lambda_{i}^2)^{1/2}} >0$. \\[.1cm] Thus, it is enough to prove  our upper-lower class test with $\gamma_n$ instead of $\gamma'_n.$\\[.2cm]
We need the following lemma which will be proven in in subsection \ref{SubsecAbout_Phi_n}.
\begin{lem}\label{LemSubseqPhi_nEnoughUpper_or_LowerCl}
	Let $\phi_n, n \ge 1$ be a non-decreasing sequence satisfying condition (\ref{phi}) and  given $\alpha >0,$ define
	$n_k =n_k(\alpha) =[\exp(\alpha_k/Lk] , k \ge 1.$ Assume that condition (\ref{main112}) holds.
	 Then we have
	$$
		\sum_{ k \ge 1 }  \frac{\gamma_{n_k}}{\phi_{n_k}}
		\exp\left( - \frac{ \phi_{n_k}^2 }{ 2\lambda_{n_k,1}^2 } \right) 
	< 
		\infty 
	\iff
		\sum_{n \ge 1} \frac{ \phi_n\gamma_n}{n} \exp\left( - \frac{ \phi^2_n }{ 2\lambda_{n,1}^2} \right)  
	< 
		\infty
	$$
\end{lem}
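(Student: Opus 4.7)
The plan is to prove the equivalence by partitioning the $n$-series into the blocks $B_k := \{ n : n_k \le n < n_{k+1}\}$ induced by the subsequence, and showing that
$$S_k := \sum_{n \in B_k} \frac{\phi_n \gamma_n}{n}\exp\!\left(-\frac{\phi_n^2}{2\lambda_{n,1}^2}\right) \asymp b_k := \frac{\gamma_{n_k}}{\phi_{n_k}}\exp\!\left(-\frac{\phi_{n_k}^2}{2\lambda_{n_k,1}^2}\right)$$
uniformly in $k$, on a sufficiently rich set of $k$'s, so that summing over $k$ yields the claim.

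First I would record the block geometry: $n_k = [\exp(\alpha k/Lk)]$ gives $Ln_k \sim \alpha k/Lk$, hence $LLn \sim Lk$ uniformly on $B_k$ and $\sum_{n \in B_k} 1/n = \log(n_{k+1}/n_k) + o(1) \sim \alpha/Lk$. Weyl's inequality applied to (\ref{DELTA}) gives $|\lambda_{n,i}^2 - \lambda_{n_k,i}^2| \le \Delta_k(\alpha)$ for all $n \in B_k$ and all $i$. Since (\ref{main112}) implies $\Delta_k(\alpha) \to 0$ while $\lambda_1 > 0$, this perturbation is small enough to yield $\gamma_n \asymp \gamma_{n_k}$ on $B_k$ (the min-structure of $\gamma_n$ being continuous in the eigenvalues for large $k$). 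From (\ref{phi}) and $LLn \sim Lk$ one infers $\phi_n^2/\lambda_{n,1}^2 \asymp Lk$ on $B_k$.

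The heart of the proof is the inner comparison. Using
$$\left|\frac{\phi_n^2}{2\lambda_{n,1}^2} - \frac{\phi_n^2}{2\lambda_{n_k,1}^2}\right| \le \frac{\phi_n^2\,\Delta_k(\alpha)}{2\lambda_{n,1}^2\lambda_{n_k,1}^2} \lesssim Lk\,\Delta_k(\alpha),$$
replacing $\lambda_{n,1}^2$ by $\lambda_{n_k,1}^2$ in the exponent costs a multiplicative factor $\exp(O(Lk\,\Delta_k(\alpha)))$. The resulting sum $\sum_{n \in B_k}(\phi_n/n)\,e^{-\phi_n^2/(2\lambda_{n_k,1}^2)}$ I would handle via the identity $\phi\,e^{-\phi^2/(2c)} = -c\,(d/d\phi)\,e^{-\phi^2/(2c)}$ combined with Abel summation (equivalently, an integral comparison), exploiting the monotonicity of $\phi_n$ and the near-constancy of $1/n$ on $B_k$; this yields $\asymp (\lambda_{n_k,1}^2/\phi_{n_k})\,e^{-\phi_{n_k}^2/(2\lambda_{n_k,1}^2)}$, whence $S_k \asymp b_k \cdot \exp(O(Lk\,\Delta_k(\alpha)))$.

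The main obstacle is to absorb the factor $\exp(O(Lk\,\Delta_k(\alpha)))$, because (\ref{main112}) does not in general give $Lk\,\Delta_k(\alpha) = O(1)$. My plan is to split the index set into a "good" set on which this factor is uniformly bounded (so that $S_k \asymp b_k$ directly) and an "exceptional" set, and to exploit (\ref{main112}) with $\delta$ chosen arbitrarily small to show that on the exceptional blocks the contribution is dominated by a convergent tail of $\sum_k k^{-\delta}\Delta_k(\alpha)$ in both directions. It is precisely this balance, the need to tame the $Lk$-factor by summability of $k^{-\delta}\Delta_k(\alpha)$ for every $\delta > 0$, that dictates the exact form of hypothesis (\ref{main112}) rather than the weaker $\Delta_k(\alpha) \to 0$.
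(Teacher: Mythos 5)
Your overall decomposition coincides with the paper's: partition the $n$-series into the blocks between consecutive $n_k$'s, show that the block sum is comparable to $\gamma_{n_k}\phi_{n_k}^{-1}\exp(-\phi_{n_k}^2/(2\lambda_{n_k,1}^2))$ on a ``good'' set of $k$ where $Lk\,\Delta_k(\alpha)$ stays bounded (the paper's $\mathbb{N}_3=\{k:\Delta_k(\alpha)\le\lambda_1^2/Lk\}$), and dispose of the exceptional $k$'s by combining (\ref{main112}) with crude uniform bounds of the order $(LLn_k)^{d/2}(Ln_k)^{-1/2}$ for each of the two series separately. That architecture matches the paper and is sound. (Also, the gain from $\phi_{n_k}$ in the summands to $1/\phi_{n_k}$ in the block total comes simply from $\sum_{n\in B_k}1/n\sim\alpha/Lk\asymp\lambda_{n_k,1}^2/\phi_{n_k}^2$ together with the near-constancy of $\phi_n$ and of the exponential on a good block; the Abel-summation identity you invoke is an unnecessary detour, though not an error.)

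There is, however, a genuine gap in your second paragraph. You assert that $\Delta_k(\alpha)\to0$ alone yields $\gamma_n\asymp\gamma_{n_k}$ on $B_k$, pointing to continuity of the min-structure of $\gamma_n$. That is false uniformly in $k$. For $2\le i\le d_1$ the factor $\lambda_{n,1}/(\lambda_{n,1}^2-\lambda_{n,i}^2)^{1/2}\wedge\phi_n/\lambda_{n,1}$ is being evaluated precisely where the cap $\phi_n/\lambda_{n,1}\asymp\sqrt{Lk}$ is active, i.e.\ where $\lambda_{n,1}^2-\lambda_{n,i}^2\lesssim 1/Lk$; a perturbation of the eigenvalue gap by an amount of order $\Delta_k(\alpha)$ then changes this factor by a multiplicative quantity that is unbounded in $k$ whenever $Lk\,\Delta_k(\alpha)\to\infty$. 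Concretely, if $\lambda_{n_k,1}^2-\lambda_{n_k,i}^2$ is essentially $0$ while $\lambda_{n,1}^2-\lambda_{n,i}^2\asymp\Delta_k(\alpha)\gg 1/Lk$ for some $n\in B_k$, the $i$-th factor drops from $\asymp\sqrt{Lk}$ at $n_k$ to $\asymp\Delta_k(\alpha)^{-1/2}$ at $n$, a ratio that diverges. Thus the $\gamma$-comparison contributes its own $\exp\bigl(O(Lk\,\Delta_k(\alpha))\bigr)$ penalty, exactly like the exponential term; the paper makes this quantitative by bounding $|\log\gamma''_{n,k}-\log\gamma''_{m,k}|$ via the mean value theorem applied to $x\mapsto\log(x\vee\tilde{\phi}_{n_k}^{-2})$, which yields $\lesssim Lk\,\|\Gamma_n^2-\Gamma_m^2\|$. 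Your good/bad split in the last paragraph would of course absorb this as well, since the same threshold $Lk\,\Delta_k(\alpha)=O(1)$ controls both factors, so the plan is salvageable. But as written you have misdiagnosed where the difficulty lives: the split is forced by $\gamma_n$ just as much as by the exponential, and the unconditional claim $\gamma_n\asymp\gamma_{n_k}$ needs to be dropped.
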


\subsection{Proof of Theorem \ref{main}: the upper class part}	\label{ProofMainTheoUppClass}
Let $\phi_n$ be a non-decreasing sequence satisfying condition (\ref{phi}) and 	$$
		\sum_{n=1}^{\infty} \frac{ \gamma_n \phi_n}{n} \exp\left( -\frac{ \phi_n^2 }{2\lambda_{n,1}^2} \right) 
	< 
		\infty.
	$$
Set $n_k =n_k(1)=[\exp(k/Lk)], k \ge 1.$ Lemma \ref{LemSubseqPhi_nEnoughUpper_or_LowerCl} then implies 
\be \label{5444}
\sum_{k=1}^{\infty} \gamma_{n_k}\frac{\lambda_{n_k,1}}{\phi_{n_k}}\exp\left( -\frac{ \phi_{n_k}^2 }{2\lambda_{n_k,1}^2} \right) < \infty.
\ee
Moreover,  it is easy to see that
\be \label{n_k-ratios}
\log(n_{k+1}/n_k) \sim 1/Lk \mbox{ as } k \to \infty
\ee
and that for large enough $k,$
\be \label{5333}
\frac{\lambda_{n_k,1}^2}{2} L k  \le \phi^2_{n_k} \le 4 \lambda_{n_k,1}^2 Lk \mbox{ and } 1/2 \le \lambda^2_{n_k,1}/\lambda^2_1\le 2
\ee
We have to show that
	$$
		\PP \{ | \Gamma_n T_n | >\phi_n \sqrt{n} \text{ infinitely often} \} 
	= 
		0.
	$$
By the Borel-Cantelli lemma it is   enough to prove that
\be
\sum_{k = 1 }^{\infty}   
	 \PP
	 	 \left(   \bigcup_{n=n_k +1 }^{n_{k+1}  } \{ |  \Gamma_n T_n |  >  \phi_n \sqrt{n} \} 
		 \right) < \infty.
\ee		 
To that end we will split this series into two parts:
 we first define a subsequence where the matrix sequence $\Gamma^2_n$ does not fluctuate too much for $n_k < n \le n_{k+1}$ and we show that the part of the above series determined by this subsequence is finite (see Step 1). In Step 2 we then show that the sum over the remaining $k$'s is finite as well. Since this will be a relatively sparse subsequence  coarse bounds for the probabilities involved will be sufficient at this stage.\\[.2cm]
\emph{Step 1} We start by defining the ``good'' subsequence as follows,
\be \label{Eq_ProofUppPartGoodSet} 
	 	\N_1 := 
		 \{ k : \Delta_k(1)  \le  \lambda_1^2(Lk)^{-3} \},
\ee
where $\Delta_k(1)$ is  as in (\ref{DELTA}).\\
Noting that
\begin{eqnarray*}
\bigcup_{n=n_k +1 }^{n_{k+1}  } \{ |  \Gamma_n T_n |  >  \phi_n \sqrt{n} \} 
&\subset&
 \bigcup_{n=n_k +1 }^{n_{k+1}  } \{ |  \Gamma_n T_n |  >  \phi_{n_k} \sqrt{n_k} \}\\
 &\subset& \bigcup_{n=n_k +1 }^{n_{k+1}  } \{ |  \Gamma_{n_k} T_n |  >  (\phi_{n_k} -\lambda_1^2/\phi_{n_k}) \sqrt{n_k} \}\\
 && \cup \bigcup_{n=n_k +1 }^{n_{k+1}  } \{ |  (\Gamma_n-\Gamma_{n_k}) T_n |  > \lambda_1^2 \sqrt{n_k}/\phi_{n_k} \}
		  \end{eqnarray*}
we see that
\be \label{5777}
 \PP\left(   \bigcup_{n=n_k +1 }^{n_{k+1}  } \{ |  \Gamma_n T_n |  >  \phi_n \sqrt{n} \} \right) 
 \le p_{k,1} + p_{k,2},
 \ee
 where
 $$p_{k,1}:= \PP\left\{\max_{1 \le n \le n_{k+1}} |  \Gamma_{n_k} T_n |  >  (\phi_{n_k} -\lambda_1^2/\phi_{n_k}) \sqrt{n_k}
\right\}$$
and
$$p_{k,2}:= \PP\left\{ \max_{n_k < n \le n_{k+1}}\|\Gamma_n - \Gamma_{n_k}\| |T_n| > \lambda_1^2\sqrt{n_k}/\phi_{n_k}\right\}.
$$
Set $\psi_n=\phi_n - \lambda_1^2/\phi_n, n \ge 1.$ Combining the L\'evy inequality  with Theorem \ref{LemTailBoundsDistriGammanZn}(i), we get that
$$p_{k,1} \le 2 \PP\{|\Gamma_{n_k}T_{n_{k+1}}| > \psi_{n_k}\sqrt{n_{k}}\}
\le 2\tilde{C}_2 \gamma_{n_k}\frac{\lambda_{n_k,1}}{\psi_{n_k}\sqrt{n_k/n_{k+1}}}\exp\left(-\frac{\psi_{n_k}^2}{2\lambda_{n_k,1}^2}\frac{n_k}{n_{k+1}}\right)
$$
Since $\psi_{n_k}\sim \phi_{n_k}$ and $n_{k+1}\sim n_k $ as $k \to \infty,$  and by definition of $\psi_n$ we obtain for large $k,$
$$p_{k,1} \le 3e^2\tilde{C}_2\gamma_{n_k}\frac{\lambda_{n_k,1}}{\phi_{n_k}}
\exp\left(-\frac{\phi_{n_k}^2}{2\lambda_{n_k,1}^2}\frac{n_k}{n_{k+1}}\right).$$
On account of relation (\ref{n_k-ratios}) we have  for large $k$, 
$$n_k/n_{k+1} \ge \exp(-2/Lk) \ge 1-2/Lk.$$
Recalling (\ref{5333}) we can  conclude that for large $k,$
$$
p_{k,1} \le  3e^6\tilde{C}_2\gamma_{n_k}\frac{\lambda_{n_k,1}}{\phi_{n_k}}
\exp\left(-\frac{\phi_{n_k}^2}{2\lambda_{n_k,1}^2}\right).$$
In view of (\ref{5444}) it is now clear that
\be \label{5888}
\sum_{k=1}^{\infty} p_{k,1} < \infty.
\ee
Combining the fact that $\|\Gamma_m -\Gamma_n\|^2 \le \|\Gamma_m^2 -\Gamma_n^2\|$ (see \cite{Bha},(X.2))  with (\ref{5333}) and the L\'evy inequality we get after a small calculation for large $k \in \N_1,$
$$p_{k,2} \le 2\PP\{|Z_1| \ge  Lk/3\},$$
 Using the upper  bound in  (\ref{eq42}), it  easily follows that
\be \label{5999}
\sum_{k \in \N_1} p_{k,2} < \infty.
\ee
\emph{Step 2} Suppose now that $k \notin \N_1$. 
Since $\Gamma_n \to \Gamma$, we have $\|\Gamma_n\|	 \le 2 \|\Gamma\|=2\lambda_1$ if $n \ge n_k$ and $k$ is large enough. It then follows that 
$$|\Gamma_n T_n| \le \|\Gamma_n\| |T_n| \le 2 \lambda_1 |T_n|.$$
We can conclude that for large $k,$
$$ \PP \left(   \bigcup_{n=n_k +1 }^{n_{k+1}  } \{ |  \Gamma_n T_n |  >  \phi_n \sqrt{n} \} 
		 \right) \le \PP\left\{\max_{1 \le n \le n_{k+1}} |T_n| \ge \phi_{n_k} \sqrt{n_k}/(2 \lambda_1)\right\}.$$
Using once more the L\'evy inequality, the fact that $n_k \sim n_{k+1}$ as $k \to \infty$  and (\ref{5333}) we find that if $k$ is large, the last probability is 
$$\le 2\PP\{|Z_1| \ge \phi_{n_k}/(3\lambda_1)\} \le 2\PP\{|Z_1| \ge (Lk)^{1/2} /6\}.$$ 
The last term is by (\ref{eq42}) of order $O((Lk)^{d/2 -1}k^{-1/72}).$\\[.1cm]
On the other hand we have by (\ref{DELTA}) and the definition of $\N_1$,
$$\sum_{k \not \in \N_1} (Lk)^{-3} k^{-\delta} < \infty, \forall \delta >0,$$
and we see that
$$\sum_{k \not \in \N_1}  \PP \left(   \bigcup_{n=n_k +1 }^{n_{k+1}  } \{ |  \Gamma_n T_n |  >  \phi_n \sqrt{n} \}  \right)  < \infty.$$
This implies in combination with (\ref{5777})-(\ref{5999}) the upper class part. 
\subsection{Proof of Theorem \ref{main}: the lower class part}\label{SubSecProofMainTheoLowClass}
		  Let $\phi_n$ be a non-decreasing sequence satisfying condition (\ref{phi}) and	$$
		\sum_{n=1}^{\infty} \frac{ \gamma_n \phi_n}{n} \exp\left( -\frac{ \phi_n^2 }{2\lambda_{n,1}^2} \right) =
		\infty.
	$$
Set $n_j=n_j(\alpha)=[\exp(\alpha j/Lj)], j\ge 1,$ where $\alpha > 0$ will be specified later. \\
From the definition of the subsequence $n_j$ it  follows that for $m \ge j \ge i_0$ (say)
\begin{eqnarray} \label{51515}
n_{m}/n_j &\ge& \exp(\alpha(m/\log m - j/\log j)/2) \nonumber \\
 &\ge& \exp(\alpha(m-j)/(4\log m )).
\end{eqnarray}
Consider the events
$$
A_n := \left\{   \sqrt{n} \phi_n < | \Gamma_n T_n | 
		< \sqrt{n} \left( \phi_n + \beta/ \phi_n  \right) \right\},
$$
where $\beta = 2\tilde{C}_5\lambda_1^2.$\\[.1cm]
 Since $\lambda_{n,1} \to \lambda_1$ we can infer from Theorem \ref{LemTailBoundsDistriGammanZn}(iv)  and Lemma~\ref{LemSubseqPhi_nEnoughUpper_or_LowerCl} that 
\be \label{eq310zzz}
\sum_{ j \ge 1} \PP ( A_{n_j } ) = \infty
\ee 
 Notice that 
$$
\PP \left\{ A_{n_j } \text{ infinitely often} \right\} 
	\le \PP  \left\{  | \Gamma_{n_j } T_{n_j } | 
			  >  \phi_{n_j } \sqrt{ n_j } \text{ infinitely often} 
			   \right\}.
$$
An application of the Hewitt-Savage 0-1 law shows that the latter probability can only be 0 or 1. Consequently, it suffices to show   that there exists a constant $K_0 >0$ such that for all (large) $i \ge 1$ 
\be \label{limsup}
\PP ( \cup_{j=i}^{\infty} A_{n_j } )  \ge K_0.
\ee
\noindent To that end we need a
	good lower bound for $\PP ( \cup_{j=i}^k A_{n_j })$ for any given natural numbers $i < k.$ Similarly as in the classical papers \cite{Er} and \cite{Fe0}  this 
can be accomplished by providing good upper bounds
	for $$\PP ( A_{n_j } \cap A_{n_m } ), i\le j < m \le k.$$ 
In our present setting this will be only possible for a subsequence of $\{n_j\}$. \\So we have to add an extra Step 1 where we have  to determine a suitable subsequence. In Step 2 we provide upper bounds  for $\PP ( A_{n_j } \cap A_{n_m } )$ for numbers $n_j$ and $n_m$ in this subsequence. In Step 3 we finally prove (\ref{limsup}) where we take the union only over the $n_j$'s in  our subsequence.\\[.2cm]
\emph{Step 1} We first note that for $j < m,$
$$A_{n_j } \cap A_{n_m } \subset  A_{n_j }  \cap (B_{j,m} \cup B'_{j,m}),$$
where
$$
B_{j,m} =	\bigg\{ | \Gamma_{n_m } (T_{n_m } -T_{n_j } ) |  > \sqrt{n_m}  (\phi_{n_m} - \beta/\phi_{n_m})  
		-\sqrt{n_j} 
		\left(    \phi_{n_j} + \beta / \phi_{n_j}  \right)\bigg\} 
$$
and
$$B'_{j,m} =\left\{| (\Gamma_{n_m} -\Gamma_{n_j } ) T_{n_j } |
		 >\beta \sqrt{n_m}  /\phi_{n_m } \right\}.
$$
We further set 
$$C_n:=\{|T_n| > 2\sqrt{nLLn}\}, n \ge 1$$
Using, for instance, (\ref{eq42}) we see that
\be \label{eq31xx}
\sum_{n=1}^{\infty} \PP(C_n) < \infty
\ee
\begin{lem} \label{lem32xx}
 There exists a $j_0 \ge 1$ such that we have for any $j \ge j_0,$ $$B'_{j,m} \subset C_{n_j}$$
 provided that one of the two following conditions is satisfied
 \begin{itemize}
\item [(i)]  $m \ge j+(\log j)^3$ 
\item[(ii)] $\|\Gamma_{n_m} - \Gamma_{n_j}\| \le (\log j)^{-2}$ and $j < m \le j+(\log j)^3.$
\end{itemize}
\end{lem}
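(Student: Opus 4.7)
The plan is to show that on the event $B'_{j,m}$ the vector $T_{n_j}$ itself must be large enough to fall into $C_{n_j}$, so no probabilistic estimate is really needed here; everything reduces to deterministic bookkeeping once we apply the operator-norm bound $|(\Gamma_{n_m}-\Gamma_{n_j})T_{n_j}|\le\|\Gamma_{n_m}-\Gamma_{n_j}\|\cdot|T_{n_j}|$. This bound converts $B'_{j,m}$ into
$$|T_{n_j}|>\frac{\beta\sqrt{n_m}}{\phi_{n_m}\,\|\Gamma_{n_m}-\Gamma_{n_j}\|},$$
so the lemma follows as soon as we verify, in both cases (i) and (ii), that
$$\frac{\beta\sqrt{n_m}}{\phi_{n_m}\,\|\Gamma_{n_m}-\Gamma_{n_j}\|}\ge 2\sqrt{n_j\,LLn_j}\qquad(j\ge j_0).$$

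I would next reduce this via condition (\ref{phi}) and $\lambda_{n,1}\to\lambda_1$ to the bound $\phi_{n_m}\le C_0\sqrt{LLn_m}$, and then use the defining formula $n_k=[\exp(\alpha k/Lk)]$, which yields $LLn_k\sim Lk$, to obtain $\phi_{n_m}\le C_1\sqrt{Lm}$ and $\sqrt{LLn_j}\le C_1\sqrt{Lj}$ for large $j$. After this reduction the required inequality becomes
$$\|\Gamma_{n_m}-\Gamma_{n_j}\|\le c\,\frac{\sqrt{n_m/n_j}}{\sqrt{Lm\cdot Lj}}$$
for some absolute constant $c>0$.

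In case (ii), where $j<m\le j+(Lj)^3$, a direct comparison gives $\log(n_m/n_j)=O(1)$ while $Lm\sim Lj$, so the right-hand side above is at least $c'/Lj$; the hypothesis $\|\Gamma_{n_m}-\Gamma_{n_j}\|\le(Lj)^{-2}$ is thus easily enough for large $j$. In case (i), where $m\ge j+(Lj)^3$, I would use the trivial bound $\|\Gamma_{n_m}-\Gamma_{n_j}\|\le 4\lambda_1$ (valid for large $j$ because $\Gamma_n\to\Gamma$) together with the growth estimate (\ref{51515}). Splitting into the subcases $m\le 2j$ (in which $Lm\le 2Lj$ and $m-j\ge(Lj)^3$ force $\sqrt{n_m/n_j}\ge\exp(c(Lj)^2)$) and $m>2j$ (in which $m-j\ge m/2$ forces $\sqrt{n_m/n_j}\ge\exp(c\,m/Lm)$) covers all possibilities, and in both subcases $\sqrt{n_m/n_j}$ dominates any fixed polynomial in $Lm\cdot Lj$.

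The only delicate point is the subcase analysis in (i): one has to check that the growth of $n_m/n_j$ provided by (\ref{51515}) really does beat the $\phi_{n_m}\sqrt{LLn_j}$ factor when $m$ is only slightly above $j+(Lj)^3$, which is exactly why the threshold separating (i) from (ii) is the cube $(Lj)^3$ rather than a lower power of $Lj$. Everything else amounts to routine bookkeeping of absolute constants and of the comparisons $LLn_k\sim Lk$.
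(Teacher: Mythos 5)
Your reduction is exactly the paper's: on the complement of $C_{n_j}$, bound $|(\Gamma_{n_m}-\Gamma_{n_j})T_{n_j}|$ by $\|\Gamma_{n_m}-\Gamma_{n_j}\|\,|T_{n_j}|$ and verify the resulting deterministic inequality using (\ref{51515}) and the comparisons $LLn_k\sim Lk$, $\phi_{n_m}\asymp\sqrt{Lm}$. The paper streamlines case (i) by observing that $x\mapsto\sqrt{x}/(\log\log x)^{3/2}$ is increasing, so the inequality $\sqrt{n_m/n_j}\ge 8\lambda_1^3(LLn_m)^{3/2}$ need only be checked at the boundary $m=[j+(\log j)^3]+1$; your $m\le 2j$ versus $m>2j$ split reaches the same conclusion by a slightly longer but equivalent route. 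One small slip in case (ii): for $j<m\le j+(\log j)^3$ one has $\log(n_m/n_j)\asymp\alpha(\log j)^2$, not $O(1)$; fortunately this only strengthens your bound, and the estimate you actually invoke there is just the trivial $\sqrt{n_m/n_j}\ge 1$, so the argument is unaffected.
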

\noindent {\bf Proof} 
(i) In this case we have if $j$ is sufficiently large, 
\be \label{eqzx}
\sqrt{n_m/n_j} \ge 8 \lambda_1^3(\log \log n_m)^{3/2} \ge \phi^3_{n_m}
\ee
To see this, we set $m_j := [j + (\log j)^3] +1.$\\
By monotonicity of the function $x \mapsto \sqrt{x}/(\log \log x)^{3/2}, x \ge e^e$, it is  enough to check that
$$\sqrt{n_{m_j} /n_j} \ge 8 \lambda_1^3(\log \log n_{m_j})^{3/2}.$$
This follows easily from (\ref{51515}) and the fact that $\log m_j /\log j \to 1$ as $j \to \infty.$\\[.1cm]
Assuming that $\|\Gamma_{n_m} - \Gamma_{n_j}\| \le 1, j, m \ge j_0$ (recall that  $\Gamma_n \to \Gamma$) we have 
 \be \label{Eq_LowPartUppBoundForIntersectionEq1}
	 	| (\Gamma_{n_m} -\Gamma_{n_j} ) T_{n_j }(\omega) | 
	 \le 
	 	2\sqrt{n_j  LLn_j}, \omega \not \in C_{n_j } 		\ee
		which is for large enough $j$,
$$
	\le 
	(4\lambda^3_1)^{-1}\sqrt{n_m} / LLn_m     \le \beta \sqrt{n_m}  /\phi_{n_m}
	$$		
(ii) 	In this case we get for $\omega \not \in C_{n_j}$ and large $j,$
$$ | (\Gamma_{n_m} -\Gamma_{n_j} ) T_{n_j }(\omega) | 
	 \le 
	 	2 \sqrt{n_j}( \log j)^{-3/2}\le \beta\sqrt{n_m}/\phi_{n_m}$$ and the lemma has been proven. \qed\\[.2cm]
Consider the subsequence 
$$\mathbb{N}_2: =\left\{j \ge 1: \max_{j+1 \le m \le j+(\log j)^3}\|\Gamma_{n_m}- \Gamma_{n_{j}}\| < (\log j)^{-2}\right\}.$$
From Lemma \ref{lem32xx} it is then clear that  $B'_{j,m} \subset C_{n_j}$ for all $m > j $ if $j \in \mathbb{N}_2$ is large enough. \\We next show that the sets $A_{n_j}, j \not \in \mathbb{N}_2$ will be irrelevant for our purposes. \\[.1cm]To that end we
		need the following lemma.
\begin{lem} \label{lem251} Assume that condition (\ref{main112}) is satisfied. Let $\ell_j$ be a non-decreasing sequence of natural numbers satisfying $\ell_j \le j/2$ and
 $\ell_j/\ell_{[j/2]} = O(1)$ as $j \to \infty.$ Then we have for any $\delta>0,$
 $$\sum_{j=1}^{\infty} \ell_j^{-1} j^{-\delta}\max_{j < m \le j+\ell_j} \|\Gamma^2_{n_{m}}- \Gamma^2_{n_{j}}\|< \infty.$$ 
 \end{lem}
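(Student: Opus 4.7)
The strategy is to combine a telescoping bound with a swap in the order of summation, reducing the problem to condition (\ref{main112}) applied at the single parameter $\alpha$.

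First, for any $j$ and any $m$ with $j<m\le j+\ell_j$, the triangle inequality together with the definition (\ref{DELTA}) of $\Delta_i(\alpha)$ yields
$$\max_{j<m\le j+\ell_j}\|\Gamma^2_{n_m}-\Gamma^2_{n_j}\|\;\le\;\sum_{i=j}^{j+\ell_j-1}\|\Gamma^2_{n_{i+1}}-\Gamma^2_{n_i}\|\;\le\;\sum_{i=j}^{j+\ell_j-1}\Delta_i(\alpha),$$
because $n_i,n_{i+1}\in[n_i(\alpha),n_{i+1}(\alpha)]$. I would substitute this bound into the target series and interchange the order of summation:
$$S:=\sum_j \ell_j^{-1}j^{-\delta}\max_{j<m\le j+\ell_j}\|\Gamma^2_{n_m}-\Gamma^2_{n_j}\|\;\le\;\sum_i \Delta_i(\alpha)\sum_{j\in J(i)} \ell_j^{-1}j^{-\delta},$$
where $J(i):=\{j\ge 1:j\le i\le j+\ell_j-1\}$. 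The remainder of the argument is to show that the inner sum is $O(i^{-\delta})$ uniformly in large $i$, so that $S$ is dominated (up to a constant) by $\sum_i i^{-\delta}\Delta_i(\alpha)$, which is finite by (\ref{main112}).

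For the inner sum the plan is to combine three elementary observations, each valid once $i$ is large. First, $|J(i)|\le \ell_i$: any $j\in J(i)$ satisfies $j\ge i-\ell_j+1\ge i-\ell_i+1$ by monotonicity of $\ell$. Second, the assumption $\ell_i\le i/2$ forces $j\ge i/2$ for every $j\in J(i)$, whence $j^{-\delta}\le 2^\delta i^{-\delta}$. Third, for such $j$, monotonicity gives $\ell_j\ge \ell_{[i/2]}$, and the hypothesis $\ell_j/\ell_{[j/2]}=O(1)$ provides a constant $C$ with $\ell_{[i/2]}\ge \ell_i/C$; hence $\ell_j^{-1}\le C/\ell_i$. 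Multiplying produces the decisive cancellation $|J(i)|\cdot (C/\ell_i)\cdot 2^\delta i^{-\delta}\le C\, 2^\delta\, i^{-\delta}$.

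Putting everything together and absorbing the finitely many small values of $j$ into a constant, I obtain $S\le C'\sum_i i^{-\delta}\Delta_i(\alpha)<\infty$ by (\ref{main112}). The only point requiring care is this cancellation between the factor $\ell_i$ coming from the cardinality of $J(i)$ and the factor $\ell_i^{-1}$ coming from the lower bound on $\ell_j$ for $j\in J(i)$; the doubling condition on $\ell_j$ is precisely what makes this work, and without it the estimate would fail.
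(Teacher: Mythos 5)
Your proposal is correct and takes essentially the same approach as the paper: telescope $\|\Gamma^2_{n_m}-\Gamma^2_{n_j}\|$ into consecutive increments dominated by $\Delta_i(\alpha)$, swap the order of summation, and then bound the inner sum over $j$ by combining $|J(i)|\le\ell_i$, the restriction $j>i/2$ (from $\ell_i\le i/2$), and the doubling condition $\ell_i/\ell_{[i/2]}=O(1)$ to cancel the $\ell_i$ against $\ell_j^{-1}$. The paper states this inner-sum estimate in one line; you spell out the same three observations, so there is no substantive difference.
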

 \begin{proof}
 Setting $\Delta'_k:=\|\Gamma^2_{n_{k+1}}-\Gamma^2_{n_{k}}\|, k \ge 1,$ we see
 that the above series is 
  $$\le \sum_{j=1}^{\infty} \ell_j^{-1} j^{-\delta} \sum_{k=j}^{j + \ell_j -1}\Delta'_k
 \le \sum_{k=1}^{\infty} \Delta_k(\alpha) \sum_{j=k-l_k+1}^k  \ell_j^{-1} j^{-\delta}.$$
 As we have $\ell_k \le k/2$, we can conclude that
 $$
 \sum_{j=k-\ell_k+1}^k  \ell_j^{-1} j^{-\delta} \le \ell_k \ell_{[k/2]}^{-1} (k/2)^{-\delta}=O(k^{-\delta})
 $$
 and the assertion of the lemma follows from assumption (\ref{main112}).
\end{proof}
\noindent Using the inequality $|\Gamma_{n_j} T_{n_j}| \le \|\Gamma_{n_j}\|\,|T_{n_j}|=\lambda_{n_j,1}\,|T_{n_j}|$  and (\ref{phi}), we have
$$\PP(A_{n_j}) \le \PP\{ |\Gamma_{n_j} T_{n_j}| > \sqrt{n_j}\phi_{n_j}\} \le \PP\{|Z_1| \ge \sqrt{ LLn_j}\}$$
Employing the upper bound in (\ref{eq42}) we see by definition of the subsequence $n_j$ that
\be \label{55514}
\PP(A_{n_j}) = O((\log j)^{(d-1)/2 } j^{-1/2}) \mbox{ as } j \to \infty.
\ee
By relation (X.2) in \cite{Bha} we have  for $j \not \in \mathbb{N}_2$
$$(\log j)^{-2}\le \max_{j < m \le j + (\log j)^3} \|\Gamma_{n_m} - \Gamma_{n_j}\| \le  \max_{j < m \le j + (\log j)^3} \|\Gamma^2_{n_m} - \Gamma^2_{n_j}\|^{1/2}$$
which in combination with Lemma \ref{lem251} implies
$$\sum_{j \not \in \mathbb{N}_2} (\log j)^{-7} j^{-\delta} < \infty, \forall \delta >0.$$
Combining this with (\ref{55514}) we see that $\sum_{j \not \in \mathbb{N}_2}\PP(A_{n_j})< \infty.$\\
Recalling (\ref{eq310zzz}) we find that
\be \label{515ab}
\sum_{j  \in \mathbb{N}_2}  \PP(A_{n_j} )  = \infty.
\ee
We will show that this implies for some positive constant $K_0$ and $i \ge 1,$
\be \label{516ab}
\PP\left(\bigcup_{j \in [i,\infty[ \cap \mathbb{N}_2} A_{n_j }\right) \ge K_0
\ee
which trivially implies (\ref{limsup}). \\[.2cm]
\emph{Step 2} Take two natural numbers $i < k$. Then we have,
\begin{align*} 
		\PP \left( \bigcup_{j \in [i,k] \cap \mathbb{N}_2}  A_{n_j }  \right) 
	&= 
		\sum_{j \in [i,k] \cap \mathbb{N}_2}  \PP \left( A_{n_j}  \backslash \bigcup_{m \in ]j,k] \cap \mathbb{N}_2}A_{n_m } \right)  \\
	&=
		\sum_{j \in [i,k] \cap \mathbb{N}_2} \left( \PP ( A_{n_j }  ) 
					-  
					\PP \left( \bigcup_{m \in ]j,k] \cap \mathbb{N}_2} ( A_{n_j } \cap A_{n_m } ) \right) 
				 \right)
\end{align*} 
If $i \in \mathbb{N}_2$ is large enough, we can infer from Lemma \ref{lem32xx} that
$$\bigcup_{m \in ]j,k] \cap \mathbb{N}_2} ( A_{n_j } \cap A_{n_m } ) \subset C_{n_j} \cup
\bigcup_{m \in ]j,k] \cap \mathbb{N}_2} ( A_{n_j } \cap B_{j,m} ) $$
It then follows by independence of the events $A_{n_j }$ and $B_{j,m}$ that
\be \label{319aa}
 \PP\left(\bigcup_{j \in [i,k] \cap \mathbb{N}_2} A_{n_j } \right)
\ge  \sum_{j \in [i,k] \cap \mathbb{N}_2}\PP(A_{n_j }) \bigg(1- \sum_{m \in ]j,k] \cap \mathbb{N}_2} \PP(B_{j,m})\bigg) -  \epsilon_i,\ee
where $\epsilon_i := \sum_{j=i}^{\infty} \PP(C_{n_j}) \to 0$ as $i \to \infty$ (see (\ref{eq31xx})). \\ [.3cm]  We now derive upper bounds for  $\PP(B_{j,m})$ if  $m > j$ and $j \in \mathbb{N}_2.$\\[.2cm]
{\bf Case 1: }    $j < m \le j + (\log j)^3$\\
(a) \emph{We first look at the ``small'' $m$'s  satisfying}
 $ n_m /n_j \le 4$ 	\\[.1cm]
By monotonicity of  the sequence $\phi_n$ we then have $\sqrt{n_m}/\phi_{n_m} \le 2\sqrt{n_j}/\phi_{n_j}$ and
\begin{eqnarray} \label{51717a}
&&\sqrt{n_m}  (\phi_{n_m} - \beta/\phi_{n_m})  -\sqrt{n_j} ( \phi_{n_j} + \beta / \phi_{n_j})\nonumber\\
&\ge&(\sqrt{n_m}-\sqrt{n_j})\phi_{n_m} - 3\beta \sqrt{n_j}/\phi_{n_j}.
\end{eqnarray}
Applying (\ref{51515}) with $m=j+1$, we find that
$$\sqrt{n_{j+1}/n_j} -1 \ge \alpha /(8 \log (j+1) )\ge  6\beta/\phi^2_{n_j},$$
if we choose $\alpha \ge 384 \tilde{C}_5 (= 192 \beta /\lambda_1^2)$.\\ (Here we have assumed that $\phi_{n_j} \ge (\lambda_1/2)\sqrt{\log (j+1)}$ if $j \ge i_0.$ This is possible by condition (\ref{phi}) and since $\lambda_{n,1} \to \lambda_1.$)\\[.1cm]
Returning to (\ref{51717a}) we get for $i_0 \le j < m,$
$$
\sqrt{n_m}  (\phi_{n_m} - \beta/\phi_{n_m})  -\sqrt{n_j} ( \phi_{n_j} + \beta / \phi_{n_j})
\ge\frac{1}{2}(\sqrt{n_m}-\sqrt{n_j})\phi_{n_m}
$$
Using the inequality $|\Gamma_{n_m} (T_{n_m}-T_{n_j})| \le \lambda_{n_m,1} |T_{n_m}-T_{n_j}|$ in conjunction with
 (\ref{phi}) and the fact that $\log \log n_j \sim \log j$ as $j \to \infty$ we find that for $m > j \ge i_0$ (which can be enlarged if necessary)
$$\PP(B_{j,m})\le \PP\left\{|T_{n_m}-T_{n_j}| > \frac{1}{3}(\sqrt{n_m}-\sqrt{n_j})\sqrt{\log m}\right\}.$$
As we have $T_{n_m}-T_{n_j}\stackrel{d}{=}\sqrt{n_m - n_j}\,Z_1$ we see that the last probability is
$$\le \PP\left\{|Z_1| \ge \frac{1}{3}\frac{\sqrt{ \log m}\sqrt{n_m - n_j}}{\sqrt{n_m} + \sqrt{n_j}}\right\} \le 
 \PP\left \{|Z_1| \ge \frac{1}{9}\sqrt{ \log m}\sqrt{n_m/n_j -1}\right\}.$$
 Recall that $n_m/n_j \le 4.$ \\In view of  inequality (\ref{51515}) it is now clear  that we have if $m > j \ge i_0,$
 $$\PP(B_{j,m}) \le \PP\left\{|Z_1| \ge \frac{1}{18}\sqrt{\alpha} \sqrt{m-j}\right\}.$$
Using the following simple inequality which follows from Lemma 4 in \cite{Ein1991}
\be \label{simple}
\PP\{|Z_1| \ge t\} \le \exp(-t^2/8), t \ge 2\sqrt{d} 
\ee 
and setting $\alpha = 54^2 d \vee 384 \tilde{C}_5$, we find that
\be \label{51616}
 \PP(B_{j,m}) \le  3^{j-m}, m > j \ge i_0,
 \ee
if  $n_m/n_j \le 4$ and $j < m \le m+(\log j)^3.$\\[.3cm]
 (b) \emph{We now look at the ``larger'' $m$'s  where} $n_m/n_j >4.$\\
Since the function $x\mapsto x + \beta/x$ is increasing for $x \ge \sqrt{\beta}$, we obtain in this case
 $$\sqrt{n_m}  (\phi_{n_m} - \beta/\phi_{n_m})  -\sqrt{n_j} ( \phi_{n_j} + \beta / \phi_{n_j})
 \ge \frac{1}{2}\sqrt{n_m}(\phi_{n_m} - 3\beta/\phi_{n_m}) \ge \frac{1}{3}\sqrt{n_m}\phi_{n_m}
$$
for  $m > j \ge i_0$ and large enough $i_0$.\\
Recall  that  $|\Gamma_{n_m}(T_{n_m}-T_{n_j})| \le \lambda_{n_m,1}|T_{n_m}-T_{n_j}|$ and relation (\ref{phi}). \\We then get if  $i_0$ is sufficiently large (and $n_m/n_j > 4$), 
$$\PP(B_{m,j}) \le \PP\{|Z_1| > \sqrt{\log\log n_m}/3\} \le (\log n_m)^{-1/72}, m > j \ge i_0$$
where the last inequality follows from (\ref{simple}). \\
Combining this bound with (\ref{51616}), we obtain for $j \ge i_0$ if $i_0$ is sufficiently large, 
\be \label{51717}
\sum_{m=j+1}^{j + (\log j)^3} \PP(B_{j,m}) \le \sum_{k=1}^{\infty} 3^{-k} + (\log j)^3  (\log n_j)^{-1/72} \le 2/3
\ee
since $(\log j)^3  (\log n_j)^{-1/72}\to 0$ as $j \to \infty$ by definition of the subsequence $\{n_j\}$.\\[.2cm]
{\bf Case 2}  $m > j+ (\log j)^3.$\\
We now have if $j$ is sufficiently large, 
$\sqrt{n_m/n_j} \ge \phi^3_{n_m}  $ (see (\ref{eqzx})).\\
Since $\phi_{n_j} + \beta/\phi_{n_j} \le 2 \phi_{n_j} \le 2 \phi_{n_m}$ if $j$ is large enough,
we get if $j \ge j_0$ (say)
\begin{eqnarray*}
\PP(B_{j,m})& \le& \PP\{ | \Gamma_{n_m } (T_{n_m } -T_{n_j } ) |  > \sqrt{n_m}  (\phi_{n_m} - 3\beta/\phi_{n_m}) \}\\
&\le& \PP\{ | \Gamma_{n_m } T_{n_m } |  > \sqrt{n_m}  (\phi_{n_m} - \gamma \lambda_{n_m,1}^2/\phi_{n_m}) \},
 \end{eqnarray*}
where we set $\gamma = 8\tilde{C}_5(=4\beta \lambda_1^{-2}).$	Recall that $\lambda_{n,1} \to \lambda_1$ as $n \to \infty.$	\\[.1cm]
Applying  Theorem \ref{LemTailBoundsDistriGammanZn}(iii) we get  for large enough $j,$
\be \label{51818}
\PP(B_{j,m}) \le K_1\PP(A_{n_m}) \mbox{ if } m > j+(\log j)^3,
\ee
where $K_1:=\tilde{C}_4 \exp(8\tilde{C}_5).$\\
\emph{Step 3}	  Combining  (\ref{51717}) and (\ref{51818}), we infer from (\ref{319aa}) that for $k > i \ge i_0,$
$$\PP\left(\bigcup_{j \in [i,k] \cap \mathbb{N}_2} A_{n_j } \right)
\ge  \sum_{j \in [i,k] \cap \mathbb{N}_2}\PP(A_{n_j }) \bigg(1/3 - K_1\sum_{m \in [i, k] \cap \mathbb{N}_2} \PP(A_{n_m})\bigg) -\epsilon_i.$$	  
Assume that we have chosen $i_0$ large enough so that $\PP(A_{n_j}) \le 	  1/(8 K_1), j \ge i_0.$\\ In view of (\ref{515ab}), we can find for any $i \ge i_0$ a  $k \in \mathbb{N}_2$
 such that $k \ge i$ and
$$\sum_{j \in [i,k] \cap \mathbb{N}_2}\PP(A_{n_j }) > \frac{1}{8K_1}.$$
Denote the smallest $k$ with this property by $k_i$. 
Then it is easy to see that
$$\sum_{j \in [i,k_i] \cap \mathbb{N}_2}\PP(A_{n_j }) \le \PP(A_{n_{k_i}}) +\sum_{j \in [i,k_i[ \cap \mathbb{N}_2}\PP(A_{n_j }) \le  \frac{1}{4K_1}.$$ 
We obtain for large enough $i \in \mathbb{N}_2,$
$$\PP\left(\bigcup_{j \in  \mathbb{N}_2 \cap [i,\infty[ } A_{n_j } \right) \ge\PP\left(\bigcup_{j \in [i,k_i] \cap \mathbb{N}_2} A_{n_j } \right)
\ge \frac{1}{12} \sum_{j \in [i, k_i] \cap \mathbb{N}_2} \PP(A_{n_j})-\epsilon_i \ge \frac{1}{100K_1}.$$
This clearly implies (\ref{516ab}) with $K_0= 1/(100K_1).$
\subsection{Proof of Corollary \ref{CFeller} and (\ref{eq24}) }
As in the classical case it is enough to prove the corollary for sequences $\phi_n$ satisfying condition (\ref{phi}).\\ Setting $\psi_{n,\beta} = \phi_n + \beta/\phi_n$ for $\beta \ne 0$, one easily checks that 
\begin{eqnarray*}
&&\sum_{n=1}^{\infty} \frac{\phi_n}{n \lambda_{n,1}}\prod_{i=2}^{d _1} \left(\frac{\lambda_{n,1}}{(\lambda_{n,1}^2-\lambda_{n,i}^2)^{1/2}} \wedge  \frac{\phi_n}{\lambda_{n,1}}\right) \exp\left(-\frac{\phi_n^2}{2\lambda_{n,1}^2} \right) < \infty\\
&\Longleftrightarrow&\sum_{n=1}^{\infty} \frac{\psi_{n,\beta}}{n \lambda_{n,1}}\prod_{i=2}^{d _1} \left(\frac{\lambda_{n,1}}{(\lambda_{n,1}^2-\lambda_{n,i}^2)^{1/2}} \wedge  \frac{\psi_{n, \beta}}{\lambda_{n,1}}\right) \exp\left(-\frac{\psi_{n,\beta}^2}{2\lambda_{n,1}^2} \right) < \infty
\end{eqnarray*}
By a standard argument (see, for instance, Sect. 5 in \cite{E-5}), we then can infer from Theorem C  that
$$\PP\{|S_n| \le \lambda_{n,1} \sqrt{n}\phi_n \mbox{ eventually}\}= \PP\{|\Gamma_n T_n| \le \lambda_{n,1}\sqrt{n} \phi_n \mbox{ eventually}\}$$
for any non-decreasing sequence $\phi_n$ where $\Gamma_n$ is defined as in (\ref{gamma_n}). Note that $\lambda_{n,1}$ is non-decreasing since $\Gamma_n^2 - \Gamma_m^2$ is non-negative definite if $m \le n.$ Consequently, we have $\lambda^2_{m,i} \le \lambda^2_{n,i}, 1 \le i \le d$ (see, for instance, Lemma A.1 in \cite{Ein1991}.) \\[.2cm]
In view of Theorem \ref{main} we only need to check condition (\ref{main112}). If $m < n$ we have 
\begin{eqnarray*}
\|\Gamma_n^2 - \Gamma_m^2\|&=&\sup_{|v| =1} |\langle v, (\Gamma_n^2 - \Gamma_m^2) v\rangle|\\
&=&\sup_{|v| =1}\E\langle X_1, v \rangle^2I_{\{c_m < |X_1| \le c_n\}} \le \E |X_1|^2  I_{\{c_m < |X_1| \le c_n\}}
\end{eqnarray*}
and we see that for any $\alpha >0,$
$$\sum_{k=1}^{\infty} \Delta_k(\alpha) \le \E|X_1|^2 < \infty$$
which is more than we need.\\[.3cm]
{\bf Proof of  (\ref{eq24})} This is trivial if $d_1=1$. If $d_1 \ge 2,$  we apply Theorem \ref{main} with the sequence $\lambda_{n,d}\phi_n$ (which is monotone since $\lambda_{n,d}$ is non-decreasing). Noting that $\lambda_{n,1} \to \lambda_1 > 0$, we find that
$$\PP\{|S_n| \ge \lambda_{n,d_1}\phi_n \mbox{ infinitely often}\}=1$$
provided that
\be \label{eq331}
\sum_{n=1}^{\infty} \frac{\phi_n}{n} \prod_{i=2}^{d_1} \left((1-\rho_{n,i}^2)^{-1/2} \wedge \phi_n \right)\exp((1-\rho_{n,d_1}^2)\phi_n^2/2) \exp(-\phi_n^2/2) =\infty,
\ee 
where $\rho_{n,i}^2 = \lambda_{n,i}^2/\lambda_{n,1}^2, 2 \le i \le d_1.$\\
Further note that
\begin{eqnarray*}
&&\prod_{i=2}^{d_1} \left((1-\rho_{n,i}^2)^{-1/2} \wedge \phi_n \right)\exp((1-\rho_{n,d_1}^2)\phi_n^2/2)\\
&\ge&  \left((1-\rho_{n,d_1}^2)^{-(d_1-1)/2}\exp((1-\rho_{n,d_1}^2)\phi_n^2/2)\right) \wedge \left(\phi_n^{d_1 -1} \exp((1-\rho_{n,d_1}^2)\phi_n^2/2)\right)\\
&\ge& \phi_n^{d_1-1} /\sqrt{(d_1-1)!}
\end{eqnarray*}
where we have used the trivial inequality $e^{x/2} \ge x^{k/2}/\sqrt{k!}$ for $k=d_1-1.$\\
It is now evident that if the series 
$$\sum_{n=1}^{\infty} \frac{\phi_n^{d_1}}{n} \exp(-\phi_n^2/2)$$
is divergent,  (\ref{eq331}) holds and we have proven (\ref{eq24}).
\subsection{Proof of Lemma \ref{LemSubseqPhi_nEnoughUpper_or_LowerCl}}\label{SubsecAbout_Phi_n}
We need to show
$$\sum_{ k \ge 1 }  \frac{\gamma_{n_k}}{\phi_{n_k}}
		\exp\left( - \frac{ \phi_{n_k}^2 }{ 2\lambda_{n_k,1}^2 } \right) 
	< 
		\infty 
	\iff \sum_{k \ge 1} a_k <\infty,$$
	where
$$a_k := \sum_{n_k < n \le n_{k+1}}\frac{ \phi_n\gamma_n}{n} \exp\left( - \frac{ \phi^2_n }{ 2\lambda_{n,1}^2} \right), k \ge 1 .$$
Recall definition (\ref{DELTA}) and consider the subsequence
$$\mathbb{N}_3 :=\{ k \ge 1:\Delta_k(\alpha)\le \lambda_1^2/Lk\}.$$
From condition (\ref{main112}) it is then clear that
\be \label{xx22}
\sum_{k \not \in \mathbb{N}_3} k^{-\delta} < \infty, \;\forall \delta >0.
\ee
Noting that $\gamma_n \le (\phi_n/\lambda_{n,1})^{d-1}$, it easily follows from (\ref{phi}) that
$$ \gamma_{n_{k+1}}
		\exp\left( - \frac{ \phi_{n_{k+1}}^2 }{ 2\lambda_{n_{k+1},1}^2 }\right) \le 3^{(d-1)/2} (LLn_{k+1})^{(d-1)/2} (Ln_{k+1})^{-1/2}
$$
so that we have by (\ref{xx22})
\be \label{a_k0}
\sum_{ k \not \in \mathbb{N}_3 }   \frac{\gamma_{n_{k+1}}}{ \phi_{n_{k+1}}}
		\exp\left( - \frac{ \phi_{n_{k+1}}^2 }{ 2\lambda_{n_{k+1},1}^2 } \right) 
	< \infty 
\ee
Using a similar argument as above and that $\log(n_{k+1}/n_k ) \sim \alpha(Lk)^{-1}$ we find that
\be \label{a_k1}
\sum_{ k \not \in \mathbb{N}_3 } a_k < \infty.
\ee
So it suffices to show that we have for some positive constants $C'_1 \le C'_2$ and large enough $k \in \mathbb{N}_3,$
\be \label{C_1,2}
C'_1\, \frac{\gamma_{n_{k+1}}}{ \phi_{n_{k+1}}}
		\exp\left( - \frac{ \phi_{n_{k+1}}^2 }{ 2\lambda_{n_{k+1},1}^2 } \right) 
\le a_k \le C'_2\,\frac{\gamma_{n_k}}{\phi_{n_k} }
		\exp\left( - \frac{ \phi_{n_k}^2 }{ 2\lambda_{n_k,1}^2 } \right).\ee 
We  first look at the upper bound. 
Since   any function of the form $$x \mapsto\prod_{i=2}^d  (c_i \wedge x/b)\exp(-x^2/2b^2)$$  (where $c_i> 0$) is decreasing for $x \ge b\sqrt{d}$ we have for $n_k \le n \le n_{k+1}$ and large enough $k,$
$$\gamma_n \exp(-\phi_n^2/(2\lambda_{n,1}^2)) \le \gamma'_{n,k} \exp(-\phi_{n_k}^2/(2\lambda_{n,1}^2)),$$
where 
$$ \gamma'_{n,k}: = \prod_{i=2}^d  \left(\frac{\lambda_{n,1}}{(\lambda_{n,1}^2-\lambda_{n,i}^2)^{1/2}} \wedge  \frac{\phi_{n_k}}{\lambda_{n,1}}\right) .$$
As $\lambda_{n,1}^2 \to \lambda_1^2 >0$, we have for $n \ge n_k$ and large $k,$
$\lambda_{n,1}^{d-1} \le 2 \lambda_{n_k,1}^{d-1}\le 4 \lambda_{n,1}^{d-1}$ so that 
$$\gamma'_{n,k} \le \gamma''_{n,k}: = 2\prod_{i=2}^d  \left(\frac{\lambda_{n_k,1}}{(\lambda_{n,1}^2-\lambda_{n,i}^2)^{1/2}} \wedge  \frac{\phi_{n_k}}{\lambda_{n_k,1}}\right).$$
Moreover, we have for $n_k \le m \le n \le n_{k+1},$
$$|\log(\gamma''_{n,k}/\gamma''_{m,k})|\le \sum_{i=2}^d |\log(\{\lambda_{n,1}^2-\lambda_{n,i}^2)\}\vee \tilde{\phi}_{n_k}^{-2}) -
\log(\{\lambda_{m,1}^2-\lambda_{m,i}^2\}\vee \tilde{\phi}_{n_k}^{-2}))|/2,$$
where $\tilde{\phi}_{n_k}=\phi_{n_k}/\lambda_{n_k,1}^2 \le \sqrt{3LLn_k}/\lambda_{n_k,1}\le 4\sqrt{Lk}/\lambda_{1}$ if $k$ is large enough.\\
From Lemma A.1.i in \cite{Ein1991} we can infer that 
$$|\lambda_{m,i}^2 - \lambda_{n,i}^2| \le \|\Gamma^2_m - \Gamma^2_n\|, 1 \le i \le d.$$
 An application of the mean value theorem then shows that the above sum is $$\le \tilde{\phi}_{n_k}^{2}\sum_{i=2}^d|\lambda_{m,1}^2-\lambda_{m,i}^2 -(\lambda_{n,1}^2-\lambda_{n,i}^2)| /2\le 16(d-1)Lk \|\Gamma_m^2 - \Gamma_n^2\|/\lambda_1^2 . $$
Choosing $m=n_k$ and noting that $\gamma''_{n_k,k}=2\gamma_{n_k}$, we obtain for large $k \in \mathbb{N}_3,$
$$\gamma'_{n,k}/\gamma_{n_k} \le 2\exp(16(d-1)), n_k \le n \le n_{k+1}.$$
Further note that for large enough $k,$
\begin{eqnarray*}
&&\exp(-\phi_{n_k}^2/2 \lambda_{n,1}^2)/\exp(-\phi_{n_k}^2/2 \lambda_{n_k,1}^2) \\
&=&\exp(\phi_{n_k}^2 (\lambda_{n,1}^2 - \lambda_{n_k,1}^2)/(2\lambda^2_{n_{k,1} }\lambda^2_{n,1}))\\&\le& \exp(3Lk \|\Gamma_n^2 - \Gamma_{n_k}^2\|/\lambda_1^2)
\end{eqnarray*}

We see that if $k \in \mathbb{N}_3$ is large enough, we have
\begin{eqnarray*}
a_k &\le& 2\exp(16d-13)\phi_{n_{k+1}}\gamma_{n_k}\exp(-\phi^2_{n_k}/(2\lambda^2_{n_k,1}))\sum_{n=n_k +1}^{n_{k+1}} n^{-1}\\
&\le&2\exp(16d-13)\phi_{n_{k+1}}\log(n_{k+1}/n_k)\gamma_{n_k}\exp(-\phi^2_{n_k}/(2\lambda^2_{n_k,1}))
\end{eqnarray*}
From condition (\ref{phi}) and the fact that $\lambda_{n,1} \to \lambda_1$ it easily follows that
$\phi_{n_{k+1}}/\phi_{n_k}$ remains bounded. Since $\log(n_{k+1}/n_k ) \sim \alpha(Lk)^{-1}$ and $\phi^2_{n_k} \le  4 \lambda_1^2 Lk $  for large $k$,  the upper bound in (\ref{C_1,2}) follows.\\[.2cm]
The lower bound follows by an  obvious modification of the above arguments and   Lemma \ref{LemSubseqPhi_nEnoughUpper_or_LowerCl} has been proven.

 \bibliographystyle{amsplain}

\end{document}